\theoremstyle{definition}
\newtheorem{definition}{Definition}
\newtheorem{proposition}{Proposition}
\newtheorem{theorem}{Theorem}
\newtheorem{corollary}{Corollary}
\newtheorem{conj}{Conjecture}
\newtheorem{exmp}{Example}
\newtheorem{rmk}{Remark}
\newtheorem*{A}{Acknowledgement}
\newenvironment{pf}{\proof}{\endproof}
\title{On a weak form of Ennola's conjecture about certain cubic number fields}
\author{Jinwoo Choi}
\address{Department of Mathematical Sciences, Seoul National University, Gwanak-ro 1, Gwankak-gu, Seoul, South Korea 08826}
\email{snu1719626@snu.ac.kr}
\author{Dohyeong Kim}
\address{Department of Mathematical Sciences and Research Institute of Mathematics, Seoul National University, Gwanak-ro 1, Gwankak-gu, Seoul, South Korea 08826}
\email{dohyeongkim@snu.ac.kr}
\begin{document}

\begin{abstract}
We establish a weak form of Ennola's conjecture. We achieve this by showing that two main assumptions Louboutin made in his previous work hold true.
These assumptions are about Laurent polynomials over the rationals, and we prove them by using Newton identities.
\end{abstract}

\maketitle

\section{Introduction}

An algebraic number $\epsilon$ is called \textit{exceptional} if both $\epsilon$ and $\epsilon-1$ are units.
For example, let $l \geq 3$ be an integer, and consider a non-Galois totally real cubic number field $\mathbb{Q}(\epsilon_l)$, where the minimal polynomial of $\epsilon_l$ is 
\begin{equation}
X^3 +(l-1)X^2 -lX -1 \in \mathbb{Z}[X]. \label{eq:1}
\end{equation}
One can easily observe that $\epsilon_l$ and $\epsilon_l -1$ are both units. i.e., $\epsilon_l$ is an exceptional unit.

By Dirichlet's unit theorem, the unit group $\mathbb{U}_{l}$ of the ring of integers of $\mathbb{Q}(\epsilon_l)$ has rank 2.
Hence, one can now naturally ask the following: is $\{ \epsilon_l, \epsilon_l -1 \}$ a pair of fundamental units? That is, $\mathbb{U}_{l} = \langle -1, \epsilon_l, \epsilon_l -1 \rangle$?

This is still an open problem. Ennola conjectured in \cite{Enn91} that $\{ \epsilon_l, \epsilon_l-1 \}$ above is a pair of fundamental units for the maximal order $\mathcal{O}$ of $\mathbb{Q}(\epsilon_l)$.
We call this \textit{Ennola's conjecture}.
He showed that his conjecture is true for $3 \leq l \leq 500$ and that the \textit{unit index}
$$ j_l := (\mathbb{U}_l : \langle -1, \epsilon_l, \epsilon_l -1 \rangle )$$
of the group of units generated by $-1$, $\epsilon_l$, and $\epsilon_l -1$ in the group of units $\mathbb{U}_l$ is always coprime to 2,3, and 5.
He also showed that $\{ \epsilon_l, \epsilon_l -1 \}$ is a pair of fundamental units for $\mathcal{O}$ if $( \mathcal{O}: \mathbb{Z}[\epsilon_l] ) \leq l/3$ in \cite{Enn06}.

To show that Ennola's conjecture holds true is equivalent to prove $j_l = 1$ for $\forall l \geq 3$.
Louboutin obtained several results on this conjecture.
For example, in \cite{Lou17}, he showed that for $l \geq 3$, the unit index $j_l$ is coprime to $19!$, and $j_l = 1$ for $3 \leq l \leq 5 \cdot 10^7$.
He also proved in \cite{Lou21} that if we assume ABC conjecture is true, then Ennola's conjecture is true except for finitely many $l$. i.e., $j_l = 1$ for any sufficiently large $l$.
Another significant result in \cite{Lou21} is a conditional proof of a weak form of Ennola's conjecture that for any given integer $N \geq 2$, we have $\gcd (j_l, N!) = 1$ for $l \geq l_N$ effectively large enough.

Louboutin deduced a weak form of Ennola's conjecture from Conjectures 12 and 20 of \cite{Lou21}.
Even though Louboutin checked the validity of the conjectures by computation for finitely many case in \cite{Lou21}, the Conjectures 12 and 20 are still unsolved in \cite{Lou21}.

In this paper, we provide proofs of both conjectures and consequently establish Theorem \ref{thm1} below.

\begin{theorem}[weak form of Ennola's conjecture, Theorem 2 of \cite{Lou21}]
For any given prime $p \geq 3$, there are only finitely many $l \geq 3$ for which $p$ divides the unit index $j_l$. 
Hence, for any given integer $N \geq 2$ we have $\gcd (j_l, N!) = 1$ for $l \geq l_N$ effectively large enough.
\label{thm1}
\end{theorem}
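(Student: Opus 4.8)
The plan is to exploit Louboutin's reduction in \cite{Lou21}: he shows that Theorem \ref{thm1} follows once one verifies his Conjectures 12 and 20, which are purely algebraic assertions about a family of Laurent polynomials over $\mathbb{Q}$. I would therefore take this reduction as given and concentrate all effort on proving those two statements, since the arithmetic content that relates $p \mid j_l$ to an explicit divisibility condition, and in turn to the Laurent polynomials, is already packaged in \cite{Lou21}.

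To organize the proof of the conjectures, I would start from the standard lattice-index characterization: $p \mid j_l$ forces a unit $\eta$ with $\eta^p = \pm\, \epsilon_l^{a}(\epsilon_l-1)^{b}$ for exponents $(a,b) \not\equiv (0,0) \pmod p$. Passing to the three real embeddings, each conjugate $\eta^{(i)}$ is the unique real $p$-th root (unique because $p$ is odd) of $(\epsilon_l^{(i)})^a(\epsilon_l^{(i)}-1)^b$. The coefficients of the characteristic polynomial of $\eta$ are the elementary symmetric functions $e_1, e_2, e_3$ of the $\eta^{(i)}$, and integrality of $\eta$ forces $e_1, e_2 \in \mathbb{Z}$, with $e_3 = \pm 1$ automatic since $N(\epsilon_l) = N(\epsilon_l - 1) = 1$. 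The device that makes these symmetric functions computable is \emph{Newton's identities}: they express the $e_k$ recursively in terms of the power sums $s_k = \sum_i (\eta^{(i)})^k = \sum_i \big((\epsilon_l^{(i)})^a(\epsilon_l^{(i)}-1)^b\big)^{k/p}$, converting the transcendental-looking $p$-th roots into tractable symmetric expressions.

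The heart of the matter is then to expand everything as Laurent polynomials in $1/l$, using the known asymptotics of the three roots, one of size $\approx -l$ and two clustering near $0$ and near $1$, and to show that the resulting Laurent polynomials have exactly the non-degeneracy properties asserted in Conjectures 12 and 20. I expect the main obstacle to be precisely this non-degeneracy: one must rule out every ``accidental'' coincidence in which the three embeddings conspire to make $e_1$ and $e_2$ simultaneously integral for infinitely many $l$, across all nontrivial exponent classes $(a,b) \bmod p$. This is where the explicit coefficient formulas coming from Newton's identities are indispensable, since they reduce the problem to verifying that certain universal polynomial combinations of the power-sum data do not vanish identically.

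Finally, granting the two conjectures, Louboutin's argument yields for each fixed prime $p \geq 3$ an effective threshold $l_p$ beyond which no such nontrivial relation can hold, so $p \nmid j_l$ for $l \geq l_p$, and only the finitely many $l < l_p$ can contribute a factor of $p$. Taking $l_N := \max_{p \leq N} l_p$ over the finitely many primes $p \leq N$ then gives $\gcd(j_l, N!) = 1$ for all $l \geq l_N$, which is the second assertion of Theorem \ref{thm1}.
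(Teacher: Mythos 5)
Your high-level strategy coincides with the paper's: accept Louboutin's reduction and prove his Conjectures 12 (via 14) and 20, then assemble the theorem exactly as in your final paragraph. But as a proof there is a genuine gap: the two conjectures are never actually proved, and the route you sketch for them points in the wrong direction. Everything in your second paragraph --- the unit $\eta$ with $\eta^p = \pm\,\epsilon_l^a(\epsilon_l-1)^b$, the real embeddings, integrality of $e_1,e_2$, the norm computation --- is already part of the reduction in \cite{Lou21}; it is not what remains to be shown. What remains are two purely \emph{formal} statements about Laurent polynomials in an abstract variable $T$, and their proofs involve no asymptotics of the roots in $1/l$, no embeddings, and no integrality argument. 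The paper proves Conjecture \ref{conj1} by first establishing the bridge identity $P_d(X,Y) = -f_d(Y,X,1)$ (Proposition \ref{prop2}), where $f_d$ is the polynomial expressing the power sum $s_d$ of three variables in the elementary symmetric functions; this is done by induction on $d$ using the Newton recurrence $f_d = \sigma_1 f_{d-1} - \sigma_2 f_{d-2} + \sigma_3 f_{d-3}$ specialized at $\sigma_3 = 1$. The conjecture then follows from the purely formal substitution $x_1 = T^{-a}$, $x_2 = T^{-b}$, $x_3 = T^{a+b}$, a triple whose product is $1$. You do invoke Newton's identities, but you apply them to the power sums of the conjugates of $\eta$ (i.e., to fractional powers of $\epsilon_l^a(\epsilon_l-1)^b$), which is Louboutin's starting point rather than the open problem; the decisive combinatorial identity relating his polynomial $P_d$ to the power-sum polynomials never appears in your plan, and expanding in $1/l$ can at best give approximate relations, not the exact identity \eqref{eq:2} in $\mathbb{Q}[T,T^{-1}]$ that the conjecture asserts.

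The second, independent gap is Conjecture \ref{conj2}, which your plan does not address concretely at all. One must show that $G_{a,b,m}(T) = F_{a,b}(R_{a,b,m}(T), R_{-a,-b,m}(T))$ has degree exactly $-\min(a,b)^2 < 0$. The paper does this in Section \ref{s4} by writing the perturbation of $R_{a,b}$ as $E_{a,b}(T)/(mT^m)$, expanding $F_{a,b}$ so that $G_{a,b,m}(T)$ equals the known $G_{a,b}(T)$ from Table \ref{table:3} plus cross terms $A_{k,i}^{\cdot,j}$, $B_{k,i}^{\cdot,j}$, $C_{k,i}^{\cdot,j}$, proving via the degree formulas \eqref{eq:17}--\eqref{eq:19} that the maximal degree among these is attained only at $(k,l,i,j) = (0,0,0,1)$, and finally verifying that the coefficient of $T^{-\min(a,b)^2}$ is $\pm 2c\min(a,b)/(a^2+ab+b^2) \neq 0$, separately in each of the three parity cases of $(a,b)$. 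Your phrase about ruling out ``accidental coincidences'' so that certain combinations ``do not vanish identically'' correctly names this non-degeneracy issue, but supplies no mechanism for it. In short: you identified the right target and even the right tool, but the entire mathematical content of the paper --- Proposition \ref{prop2} and the degree and leading-coefficient analysis of Section \ref{s4} --- is missing from the proposal.
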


Theorem \ref{thm1} is a statement about divisors of a unit index $j_l$.
A similar work was done by Louboutin and Lee for another family of cubic number fields. 
In \cite{LL}, they showed that the unit index $(\mathbb{U}_a: \langle -1, \epsilon_a, \epsilon_a' \rangle)$ of the group of units generated by $-1$, $\epsilon_a$, and $\epsilon_a'$ in the group of units $\mathbb{U}_a$ of the ring of integers of $\mathbb{Q}(\alpha)$ is coprime to 3 for $a \geq 1$ and coprime to 6 for $1 \leq a \equiv 2,3\ (\text{mod } 4)$, where $\alpha$ is a root of $X^3 -4a^2X+2 \in \mathbb{Z}[X]$, and $\epsilon_a, \epsilon_a'$ are two distinct units in the ring of integers of $\mathbb{Q}(\alpha)$.
Our result is similar to that of Louboutin and Lee, and we are concerned with a family of cubic polynomials \eqref{eq:1} with $l \geq 3$.
On the other hand, this family \eqref{eq:1} attracts interests both of number theorists and of topologists. For topological context, see \cite{KY}.

This paper is organized as follows. In $\S$ \ref{s2}, we introduce conjectures in \cite{Lou21}, which are used in the proof of Theorem \ref{thm1}. In $\S$ \ref{s3} and $\S$ \ref{s4}, we give proofs of conjectures.

\begin{A}
Dohyeong Kim was supported by the National Research Foundation of Korea (NRF)\footnote{the grants No.\,RS-2023-00301976 and No.\,2020R1C1C1A0100681913}. The work of Jinwoo Choi is supported by the Undergraduate Internship Program of College of Natural Sciences, Seoul National University.
\end{A}

\section{Louboutin's conjectures} \label{s2}

Conjectures in this section were necessary to prove Theorem \ref{thm1} in \cite{Lou21}, so we introduce them here. Before stating them, we need a definition of $P_d(X,Y)$, which is a basic building block of Conjecture 12.

\begin{definition} \label{def1}
For $d \geq 1$, we define the polynomial
\begin{align*}
P_d(X,Y) = d \sum_{\substack{k,l \geq 0 \\ 0 \leq 2k + 3l \leq d}} (-1)^{k-1} \binom{k+l}{k} \binom{d-k-2l}{k+l} \frac{X^{k}Y^{d-2k-3l}}{d-k-2l} \in \mathbb{Z}[X,Y].
\end{align*}
\end{definition}

One can directly compute $P_d(X,Y)$ for small $d \geq 1$ by finding all possible pairs of $(k,l)$, and below are some examples of $P_d(X,Y)$ obtained from Definition \ref{def1}.

\begin{exmp} \label{eg1}
(1) $d = 1$: $P_{1}(X,Y) = -Y$. \\
(2) $d = 2$: $P_{2}(X,Y) = -Y^{2}+2X$. \\
(3) $d = 3$: $P_{3}(X,Y) = -Y^{3}+3XY-3$. \\
(4) $d = 4$: $P_{4}(X,Y) = -Y^{4}+4XY^{2}-2X^{2}-4Y$.
\end{exmp}

Conjecture 14 implies Conjecture 12 by Proposition 15 of \cite{Lou21}. So, we introduce Conjecture 14 instead of Conjecture 12 in \cite{Lou21}. It is stated as follows:

\begin{conj}[Conjecture 14 of \cite{Lou21}] \label{conj1}
Let $a,b \in \mathbb{Z}$ be nonzero such that $c:=a+b \neq 0$. 
Set $$S_{a,b}(T) = \frac{1}{T^a} + \frac{1}{T^{b}} + T^{a+b}.$$
Then for $d \in \left\{ a,b,c \right\}$ and $P_d(X,Y)$ as in the Definition \ref{def1}, we have 
\begin{equation}
P_{|d|}(S_{a,b}(T),S_{a,b}(1/T)) = -S_{a,b}(1/T^{|d|}). \label{eq:2}
\end{equation}
Moreover, if $a$ is even and $b$ is odd, then with $$R_{a,b}(T) = \frac{1}{T^{a}} - \frac{(-1)^{a+b}}{T^{b}} + T^{a+b},$$ 
we have 
\begin{equation}
P_{|d|}(-R_{a,b}(T), -R_{a,b}(1/T)) = 
\begin{cases}
-S_{a,b}(1/T^{|d|}), & \text{if}\ d=a \\
 R_{a,b}(1/T^{|d|}), & \text{if}\ d \in \left\{ b,c \right\}.
 \end{cases} \label{eq:3}
\end{equation}
\end{conj}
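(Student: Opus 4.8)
The plan is to recognize both displayed identities as specializations of a single universal polynomial identity, and to prove that identity by realizing $P_d(X,Y)$ as the classical expansion of a power sum in terms of elementary symmetric functions. First I would introduce the three monomials $\xi_1 = T^{a}$, $\xi_2 = T^{b}$, $\xi_3 = T^{-(a+b)}$, whose product is $\xi_1\xi_2\xi_3 = 1$. A direct computation gives $S_{a,b}(1/T) = \xi_1 + \xi_2 + \xi_3 = e_1$ and $S_{a,b}(T) = \xi_1\xi_2 + \xi_2\xi_3 + \xi_3\xi_1 = e_2$, where $e_i$ is the $i$-th elementary symmetric function, while $e_3 = 1$; moreover $S_{a,b}(1/T^{|d|}) = \xi_1^{|d|} + \xi_2^{|d|} + \xi_3^{|d|} = p_{|d|}$ is the $|d|$-th power sum. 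Thus the $\xi_i$ are the roots of $z^3 - Yz^2 + Xz - 1$ with $X = S_{a,b}(T)$ and $Y = S_{a,b}(1/T)$, and \eqref{eq:2} is exactly the assertion
\[
P_n(X,Y) = -p_n(X,Y) \qquad (n \ge 1),
\]
where $p_n(X,Y) \in \mathbb{Z}[X,Y]$ denotes the $n$-th power sum of the roots of $z^3 - Yz^2 + Xz - 1$. Because this is an identity of polynomials, once established it holds under every substitution, which is what lets me dispatch both \eqref{eq:2} and \eqref{eq:3} at once.

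For the core identity I would first rewrite Definition \ref{def1} in terms of the exponents that actually occur. Writing $m = d - 2k - 3l$ for the power of $Y$ and using $d - k - 2l = m+k+l$, the coefficient of $X^k Y^m$ in $P_d$ becomes $(-1)^{k-1}\tilde c_{k,l,m}$ with
\[
\tilde c_{k,l,m} = \binom{k+l}{k}\binom{m+k+l}{m}\frac{m+2k+3l}{m+k+l} = \frac{(k+l+m-1)!}{k!\,l!\,m!}\,(m+2k+3l),
\]
the last step collapsing the two binomials into a multinomial coefficient. In this form $P_d$ is, up to the overall sign, precisely the Girard--Waring expansion of $p_d$ in the variables $e_1 = Y$, $e_2 = X$, $e_3 = 1$; equivalently, and this is the route matching the abstract, one checks directly that $P_n$ satisfies the Newton recurrence
\[
P_n = Y\,P_{n-1} - X\,P_{n-2} + P_{n-3} \qquad (n \ge 4).
\]
Comparing coefficients of $X^k Y^m$ reduces this to $\tilde c_{k,l,m} = \tilde c_{k,l,m-1} + \tilde c_{k-1,l,m} + \tilde c_{k,l-1,m}$, and after extracting the common factor $(k+l+m-2)!/(k!\,l!\,m!)$ it collapses to the one-line identity $(N-1)W = m(W-1) + k(W-2) + l(W-3)$ with $N = k+l+m$ and $W = m+2k+3l$, which is immediate. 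Together with the base cases $P_1,P_2,P_3$ read off from Example \ref{eg1} (matching $-p_1,-p_2,-p_3$ from Newton's identities with $e_3 = 1$), this proves $P_n = -p_n$, since $-p_n$ obeys the same recurrence with the same initial data, each root of $z^3 - Yz^2 + Xz - 1$ satisfying its characteristic equation.

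It then remains to specialize. Identity \eqref{eq:2} is immediate, as $p_{|d|} = S_{a,b}(1/T^{|d|})$. For \eqref{eq:3} I would rerun the argument with a signed triple: when $a$ is even and $b$ is odd I take $-T^{a},\ T^{b},\ -T^{-(a+b)}$, chosen with two sign flips so that the product is still $1$ and the first two symmetric functions reproduce exactly $-R_{a,b}(1/T)$ and $-R_{a,b}(T)$, matching the substitution $X = -R_{a,b}(T)$, $Y = -R_{a,b}(1/T)$ into the universal identity. The $|d|$-th power sum of this triple is $(-1)^{|d|}\bigl(T^{a|d|} + T^{-(a+b)|d|}\bigr) + T^{b|d|}$, so that $P_{|d|} = -p_{|d|}$ equals $-S_{a,b}(1/T^{|d|})$ when $|d|$ is even (i.e. $d=a$) and $R_{a,b}(1/T^{|d|})$ when $|d|$ is odd (i.e. $d \in \{b,c\}$); the case distinction in \eqref{eq:3} is thus governed purely by the parity of $|d|$.

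The technical heart is the Newton-recurrence step, but it becomes routine once the coefficient is rewritten as $\tfrac{(k+l+m-1)!}{k!\,l!\,m!}(m+2k+3l)$. The place demanding the most care will be the signed specialization for \eqref{eq:3}: the parities of $a$, $b$, and $c$ must be tracked so that the selected triple has product $1$ and the correct sign, and hence the correct one of the two cases, emerges for each $d$.
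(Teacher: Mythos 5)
Your proposal is correct and takes essentially the same route as the paper: your universal identity $P_n(X,Y)=-p_n$ is exactly the paper's Proposition \ref{prop2} ($P_d(X,Y)=-f_d(Y,X,1)$), proved in both cases via the three-variable Newton recurrence with base cases $d=1,2,3$, after which both proofs obtain \eqref{eq:2} and \eqref{eq:3} by specializing to a monomial triple of product $1$ (your $(T^a,T^b,T^{-a-b})$ is the entrywise reciprocal of the paper's $(T^{-a},T^{-b},T^{a+b})$, which only swaps the roles of $e_1$ and $e_2$) and to its variant with signs flipped in the first and third entries. Your single coefficient identity $(N-1)W=m(W-1)+k(W-2)+l(W-3)$ is a slightly cleaner packaging of the paper's four-case binomial computation, since the factors $m,k,l$ make the boundary cases vanish automatically, but the underlying argument is the same.
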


There is other conjecture we have to prove to establish Theorem \ref{thm1}.
Conjecture 20 in \cite{Lou21} was necessary to apply Proposition 19 of \cite{Lou21} in the proof of Theorem \ref{thm1}, which is stated below.
$M_{a,b}$ in the statement, which are necessary to establish Theorem \ref{thm1}, are given in Table 1 of \cite{Lou21}. The values $M_{a,b}$ of our interest will be introduced in Table \ref{table:2} below.

\begin{proposition}[Proposition 19 of \cite{Lou21}] \label{prop1}
Let $a, b \in \mathbb{Z}$ not both equal to 0 be given. Let $m \geq 3$ be odd. Set $G_{a,b}(T) := F_{a,b}(R_{a,b}(T),R_{-a,-b}(T))$ with $R_{a,b}(T)$ as in Conjecture \ref{conj1}, where $F_{a,b}(X,Y)$ is given as Table \ref{table:1} below. Define 
$$ R_{a,b,m}(T) = R_{a,b}(T) + \frac{b-a}{m}T^{-a-m} + \frac{(-1)^{a+b}(a-2b)}{m}T^{-b-m} +\frac{b}{m}T^{a+b-m}.$$
Assume that 
$$G_{a,b,m}(T) = F_{a,b}(R_{a,b,m}(T), R_{-a,-b,m}(T)) \in \mathbb{Q}[T,T^{-1}] $$ is of negative degree. \\
Set $N_{a,b,m} = - \deg G_{a,b,m}(T) \geq 1$ and $B_{a,b,m}:= (M_{a,b}+N_{a,b,m}+1)/2$. If $B_{a,b,m} \leq m$, then the unit $\epsilon_{a,b} = (-1)^{a+b}\epsilon_l^a(\epsilon_l-1)^b$ is not a $m$-th power in $\mathbb{Q}(\epsilon_l)$ for $l \geq l_m$ effectively large.
\end{proposition}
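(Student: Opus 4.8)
The plan is to reconstruct Louboutin's argument, which proceeds by contradiction: assuming $\epsilon_{a,b} = \eta^m$ for some unit $\eta \in \mathbb{Q}(\epsilon_l)$, one extracts from the family an integer that is simultaneously forced to be nonzero and forced, for $l$ large, to have absolute value strictly below $1$. The inputs are the asymptotic separation of the conjugates, the binomial expansion of an $m$-th root, and the Newton-identity packaging certified in Conjecture \ref{conj1}.

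First I would fix a uniformizer. As $l \to \infty$ the three real conjugates of $\epsilon_l$ separate (one tends to $1$, one to $0^{-}$, one to $-\infty$), and each admits a convergent Laurent expansion in a parameter $T = T(l) \to +\infty$. Propagating these through $\epsilon_{a,b} = (-1)^{a+b}\epsilon_l^{a}(\epsilon_l-1)^{b}$ yields expansions of its conjugates whose leading terms are $T^{-a}, T^{-b}, T^{a+b}$ up to the signs dictated by the parity of $a+b$; consequently $\mathrm{Tr}(\epsilon_{a,b}) = R_{a,b}(T) + \cdots$ and $\mathrm{Tr}(\epsilon_{a,b}^{-1}) = R_{-a,-b}(T) + \cdots$ (with $S_{a,b}$ replacing $R_{a,b}$ in the even case), matching the Laurent polynomials of Conjecture \ref{conj1}. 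This identifies $G_{a,b}(T) = F_{a,b}(R_{a,b}(T), R_{-a,-b}(T))$ as the leading asymptotics of the integer $F_{a,b}(\mathrm{Tr}(\epsilon_{a,b}), \mathrm{Tr}(\epsilon_{a,b}^{-1}))$, of degree $M_{a,b}$.

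Next, under the hypothesis $\epsilon_{a,b} = \eta^m$ (where we may assume $\eta \notin \mathbb{Q}$, since otherwise $\epsilon_{a,b}$ would be rational), the conjugates of $\eta$ are the real $m$-th roots of those of $\epsilon_{a,b}$, as $m$ is odd and the field is totally real. Expanding these $m$-th roots by the binomial series produces correction terms whose coefficients carry a denominator $m$; the first such corrections are precisely those recorded in $R_{a,b,m}(T)$, so that $\mathrm{Tr}(\eta)$ and $\mathrm{Tr}(\eta^{-1})$ are given to the relevant order by $R_{a,b,m}(T)$ and $R_{-a,-b,m}(T)$. Since $\eta$ is an algebraic integer of norm $\pm 1$, every power trace is an integer polynomial in these two traces; this is exactly the Newton-identity content packaged by the $P_d$ of Definition \ref{def1} and certified by Conjecture \ref{conj1}. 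Hence $\mathcal{B}_l := F_{a,b}(\mathrm{Tr}(\eta), \mathrm{Tr}(\eta^{-1})) \in \mathbb{Z}$, and its asymptotic expansion is $G_{a,b,m}(T)$, of degree $-N_{a,b,m} < 0$ by assumption. In particular $\mathcal{B}_l = G_{a,b,m}(T) + o(1) \to 0$, so $\mathcal{B}_l = 0$ for all $l \geq l_m$.

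The contradiction, and the heart of the matter, is to show $\mathcal{B}_l \neq 0$, and this is where the inequality $B_{a,b,m} \leq m$ enters. It controls the number of Laurent coefficients, running from the top degree $M_{a,b}$ of the honest expansion down to the bottom degree $-N_{a,b,m}$ of the $m$-th-root expansion, that would have to vanish simultaneously; since these coefficients are prescribed by the $\tfrac{1}{m}$-integral data of the binomial series, the bound $B_{a,b,m} = (M_{a,b}+N_{a,b,m}+1)/2 \leq m$ forces one of them to be a nonzero element of $\tfrac{1}{m}\mathbb{Z}$ that cannot cancel against any integer contribution, so $\mathcal{B}_l$ cannot vanish. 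I expect this last step, namely the precise bookkeeping certifying $\deg G_{a,b} = M_{a,b}$ and $\deg G_{a,b,m} = -N_{a,b,m}$ together with the non-cancellation under $B_{a,b,m} \leq m$, to be the main obstacle; it is a delicate computation on the tabulated polynomials $F_{a,b}$, and it is exactly the point at which the conjectures established in the present paper (the Newton-identity evaluation of Conjecture \ref{conj1}, and its counterpart Conjecture 20) are needed to guarantee the asymptotic forms used throughout.
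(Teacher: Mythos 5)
You should first be aware that the paper never proves Proposition \ref{prop1}: it is imported verbatim from \cite{Lou21} (Proposition 19 there) and used as a black box; the paper's contribution is to prove Conjectures \ref{conj1} and \ref{conj2}, which supply the \emph{hypotheses} of this proposition for the pairs $(a,b)$ needed in Theorem \ref{thm1}. So your proposal can only be judged as a reconstruction of Louboutin's argument. Its skeleton is right: assume $\epsilon_{a,b}=\eta^m$, note that the conjugates of $\eta$ are the real $m$-th roots of those of $\epsilon_{a,b}$ (as $m$ is odd and the field totally real), expand these roots to first order --- which produces exactly the $\tfrac{1}{m}$-corrections recorded in $R_{a,b,m}$ --- and play the rational integer $\mathcal{B}_l=F_{a,b}(\mathrm{Tr}(\eta),\mathrm{Tr}(\eta^{-1}))$ against its asymptotic expansion $G_{a,b,m}(T)$.

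The genuine gap is the step you yourself flag as ``the main obstacle,'' and the mechanism you sketch for it is not the right one. The hypothesis $B_{a,b,m}\le m$ is not a count of ``Laurent coefficients that would have to vanish simultaneously,'' and no non-cancellation in $\tfrac{1}{m}\mathbb{Z}$ is involved: $\mathcal{B}_l$ is a single integer, not a Laurent polynomial, and everything hinges on error control. The exact traces differ from $R_{a,b,m}(T)$ and $R_{-a,-b,m}(T)$ by $O(T^{c-2m})$ and $O(T^{\max(a,b)-2m})$ respectively (the next terms of the binomial series), and pushing these errors through $F_{a,b}$ --- whose monomials, evaluated at arguments of degrees $c$ and $\max(a,b)$, have degree at most $M_{a,b}=c\max(a,b)$ --- yields
\begin{equation*}
\mathcal{B}_l \;=\; G_{a,b,m}(T) \;+\; O\!\left(T^{M_{a,b}-2m}\right).
\end{equation*}
The inequality $B_{a,b,m}\le m$ is precisely $M_{a,b}+N_{a,b,m}+1\le 2m$, i.e.\ $M_{a,b}-2m<-N_{a,b,m}$, so the error term is $o(T^{-N_{a,b,m}})$ and $\mathcal{B}_l=\text{lc}(G_{a,b,m})\,T^{-N_{a,b,m}}(1+o(1))$. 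Hence for $l\ge l_m$ effectively large the integer $\mathcal{B}_l$ is nonzero yet of absolute value $<1$: that is the whole contradiction, and it is exactly this dominance estimate that your proposal lacks; note that without it even your intermediate claim $\mathcal{B}_l=G_{a,b,m}(T)+o(1)$ is unjustified. Two secondary corrections: $G_{a,b}(T)$ does not have degree $M_{a,b}$ (by Table \ref{table:3} its degree is $-\min(a,b)^2<0$; $M_{a,b}$ is the pre-cancellation degree bound just described), and the proposition does not ``need'' Conjectures \ref{conj1} and \ref{conj2} --- the logical dependence runs the other way, since the proposition assumes $\deg G_{a,b,m}<0$ and $B_{a,b,m}\le m$, and the conjectures (proved in this paper) are what verify these assumptions, with $N_{a,b,m_{a,b}}=\min(a,b)^2$, when the proposition is applied.
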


Here is the table of $F_{a,b}(X,Y)$ for cases that will be dealt in this paper ($c := a+b \neq 0$):

\begin{table}[H]
\centering
\def\arraystretch{1.2}
\begin{tabular}{|c|c|}
\hline
Cases & $F_{a,b}(X,Y)$ \\ \hline \hline
Case 1: $a \geq 1$ odd and $b \geq 1$ odd &
$F_{a,b}(X,Y) = -P_{a}(Y,X) -P_{b}(Y,X) +P_{c}(X,Y)$ \\ \hline
Case 2: $a \geq 1$ odd and $b \geq 1$ even &
$F_{a,b}(X,Y) = -P_{a}(-Y,-X) -P_{b}(-Y,-X) +P_{c}(-X,-Y)$ \\ \hline
Case 3: $a \geq 2$ even and $b \geq 1$ odd &
$F_{a,b}(X,Y) = -P_{a}(-Y,-X) -P_{b}(-Y,-X) -P_{c}(-X,-Y)$ \\ \hline
\end{tabular}
\caption{Cases of $F_{a,b}(X,Y)$.}
\label{table:1}
\end{table}

\begin{conj}[Conjecture 20 of \cite{Lou21}] \label{conj2}
With notations as in Proposition \ref{prop1}, let $(a,b) \in \mathbb{Z}_{\neq 0} \times \mathbb{Z}_{\geq 1}$ be such that $m_{a,b} = a^2 + ab + b^2$ is odd and $m_{a,b} \geq 5$. Assume that the pair $(a,b)$ is not of the form $(-2b,b)$ with $b \geq 1$ odd, $(b,b)$ with $b \geq 1$ odd, with $b \geq 2$ even. \\
Then the assumptions in Proposition \ref{prop1} are satisfied for $m=m_{a,b}$ with $M_{a,b}$ and $N_{a,b,m_{a,b}}$ as in Table 2 of \cite{Lou21}.
Namely, $\deg G_{a,b,m_{a,b}}(T)<0$, and $B_{a,b,m_{a,b}} \leq m_{a,b}$.
\end{conj}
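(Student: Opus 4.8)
The plan is to treat Conjecture~\ref{conj2} as an explicit computation powered by Conjecture~\ref{conj1}: once the identities \eqref{eq:2} and \eqref{eq:3} are in hand, every building block $P_{|d|}$ occurring in $F_{a,b}$ (via Table~\ref{table:1}) can be replaced by a closed-form Laurent polynomial, so that $G_{a,b,m}(T)$ is written out and its degree read off. The organising remark is that $R_{-a,-b}(T) = R_{a,b}(1/T)$, whence in each of the three cases $F_{a,b}$ is a fixed $\pm 1$-combination of $P_a, P_b, P_c$ evaluated at arguments of the form $(\pm R_{a,b}(T), \pm R_{a,b}(1/T))$, possibly with the two slots interchanged. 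These are exactly the inputs that Conjecture~\ref{conj1} evaluates in terms of $S_{a,b}(1/T^{|d|})$ and $R_{a,b}(1/T^{|d|})$ for $d\in\{a,b,c\}$.

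First I would dispose of the unperturbed polynomial $G_{a,b}(T) = F_{a,b}(R_{a,b}(T), R_{-a,-b}(T))$. Substituting \eqref{eq:2} and \eqref{eq:3} term by term expresses it as an explicit $\mathbb{Z}$-linear combination of the monomials constituting the $S_{a,b}(1/T^{|d|})$ and $R_{a,b}(1/T^{|d|})$. One then checks that the signs recorded in Table~\ref{table:1} are precisely those that make this combination collapse, the non-negative-degree contributions cancelling; this is the manifestation at the level of $F_{a,b}$ of the same Newton-identity mechanism that proves Conjecture~\ref{conj1}, and it reduces $G_{a,b}$ to a tail of negative degree.

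Second, I would account for the deformation. Since $R_{a,b,m}(T) = R_{a,b}(T) + \tfrac{1}{m}T^{-m}\bigl((b-a)T^{-a} + (-1)^{a+b}(a-2b)T^{-b} + b\,T^{a+b}\bigr)$, the increment $R_{a,b,m}-R_{a,b}$ is divisible by $T^{-m}$, so I would expand the polynomial $F_{a,b}$ to first order about the unperturbed arguments, the higher-order terms being of still more negative degree. The crux is to prove that for the distinguished value $m = m_{a,b} = a^2+ab+b^2$ the first-order term, assembled from $\partial_X F_{a,b}$ and $\partial_Y F_{a,b}$ paired against the prescribed coefficients $b-a$, $(-1)^{a+b}(a-2b)$, $b$, supplies the leading (top, least negative) monomial of $G_{a,b,m_{a,b}}(T)$. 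This would simultaneously give $\deg G_{a,b,m_{a,b}}(T) < 0$ and pin down $N_{a,b,m_{a,b}} = -\deg G_{a,b,m_{a,b}}(T)$ as the value listed in Table~2 of \cite{Lou21}.

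With $N_{a,b,m_{a,b}}$ determined and $M_{a,b}$ read from the table, the remaining inequality $B_{a,b,m_{a,b}} = (M_{a,b}+N_{a,b,m_{a,b}}+1)/2 \le m_{a,b}$ is quadratic in $a$ and $b$, so it can be verified directly, with at most finitely many small pairs inspected by hand. The hard part will be the second step: showing that the cancellation leaves a tail of the \emph{exact} predicted order $N_{a,b,m_{a,b}}$ uniformly in $(a,b)$, rather than merely bounding the degree, and carrying this through the three parity cases of Table~\ref{table:1}. I expect the analysis to become delicate near the excluded families $(a,b)=(-2b,b)$ and $(b,b)$, where the generic leading monomial can vanish; confirming that the degeneration is confined to exactly these families is what justifies their exclusion from the statement.
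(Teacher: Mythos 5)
Your overall architecture is the same as the paper's: split $R_{a,b,m}(T)=R_{a,b}(T)+\tfrac{1}{mT^{m}}E_{a,b}(T)$ and $R_{-a,-b,m}(T)=R_{a,b}(1/T)-\tfrac{1}{mT^{m}}E_{a,b}(1/T)$, use the now-proved Conjecture~\ref{conj1} to replace the unperturbed $G_{a,b}(T)=F_{a,b}(R_{a,b}(T),R_{a,b}(1/T))$ by the closed forms of Table~\ref{table:3}, expand in the perturbation (the paper does the full binomial expansion rather than a first-order Taylor truncation, but the bookkeeping is identical: every term carrying $i+j$ perturbation factors loses exactly $(i+j)\,m_{a,b}$ in degree, so only $i+j=1$ can matter), and finish with $B_{a,b,m_{a,b}}\le m_{a,b}$. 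On that last point there is nothing to inspect ``by hand'': $M_{a,b}+N_{a,b,m_{a,b}}=c\max(a,b)+\min(a,b)^{2}=a^{2}+ab+b^{2}=m_{a,b}$ identically, so $B_{a,b,m_{a,b}}=(m_{a,b}+1)/2\le m_{a,b}$ at once.

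The genuine gap is in your crux step: it is not true that the first-order term ``supplies the leading monomial'' of $G_{a,b,m_{a,b}}(T)$ by itself. The zeroth-order piece $G_{a,b}(T)$ is \emph{not} of strictly lower degree than the first-order piece; it already has degree exactly $-\min(a,b)^{2}$ (in Case 1 it equals $T^{-a^{2}}+T^{-b^{2}}-T^{-c^{2}}+2T^{-ab}$, whose top exponent is $-\min(a,b)^{2}$ precisely because the excluded families guarantee $a\ne b$), and this is the same degree attained by the top first-order terms, namely $A_{0,0}^{a,1}$, $B_{0,0}^{b,1}$, $C_{0,0}^{c,1}$ at $(k,l,i,j)=(0,0,0,1)$. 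Consequently the coefficient of $T^{-\min(a,b)^{2}}$ in $G_{a,b,m_{a,b}}$ is the \emph{sum} of the $\pm1$ contributed by $G_{a,b}$ and of the first-order contributions, and the entire content of the conjecture is that this sum does not cancel; checking nonvanishing of the first-order part alone cannot yield $\deg G_{a,b,m_{a,b}}=-\min(a,b)^{2}$, and leaves open exactly the zeroth/first-order cancellation that you yourself flag as the dangerous degeneration. The paper closes this by computing the combined coefficient explicitly in each case; e.g.\ in Case 1 with $\max(a,b)=a$ it is $\tfrac{ab}{m_{a,b}}+\tfrac{b^{2}-a^{2}}{m_{a,b}}+1=\tfrac{2c\min(a,b)}{m_{a,b}}\neq 0$, and similarly (with the appropriate signs) in the remaining cases and subcases $\max(a,b)=a$ or $\max(a,b)=b$. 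Once you fold the zeroth-order coefficient into that nonvanishing check, your plan becomes the paper's proof.
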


Here, we present $M_{a,b}$ and $N_{a,b,m_{a,b}}$ corresponding to each case in Table \ref{table:1}.

\begin{table}[H]
\centering
\def\arraystretch{1.2}
\begin{tabular}{|c|c|c|}
\hline
Cases in Table 1 & $M_{a,b}$ & $N_{a,b,m_{a,b}}$ \\ \hline \hline
Case 1 & $c \max(a,b)$ & $\min(a,b)^2$ \\ \hline
Case 2 & $c \max(a,b)$ & $\min(a,b)^2$ \\ \hline
Case 3 & $c \max(a,b)$ & $\min(a,b)^2$ \\ \hline
\end{tabular}
\caption{Cases of $M_{a,b}$ and $N_{a,b,m_{ab}}$.}
\label{table:2}
\end{table}

If $M_{a,b}$ and $N_{a,b,m_{a,b}}$ are as in Table \ref{table:2}, then $B_{a,b,m_{a,b}} \leq m_{a,b}$, since
$$B_{a,b,m_{a,b}} = \frac{a^2 +ab +b^2 +1}{2} = \frac{m_{a,b}+1}{2} \leq m_{a,b}$$
as \cite{Lou21} pointed out at p.15.

In summary, to prove Conjecture \ref{conj2}, we have to compute $N_{a,b,m_{a,b}}$ and verify $N_{a,b,m_{a,b}}= \min(a,b)^2$ in each case of Table \ref{table:1}.

\section{Proof of Conjecture 1} \label{s3}

The relationship between $P_d(X,Y)$ and Newton identities is important to achieve our objectives.
For notations of Newton identities, we follow conventions of \cite{CLO}.

\begin{theorem}[Newton identities, Exercise 14 in $\S 7.1$ of \cite{CLO}] \label{thm2}
Let $s_{k} = s_{k}(x_1,x_2,x_3) = x_1^{k} + x_2^{k} + x_3^{k}$ be $k$-th power sum of three variables $x_1,x_2,x_3$. 
Define $\sigma_{1} = x_1+x_2+x_3,\ \sigma_{2} = x_1x_2+x_2x_3+x_3x_1,\ \sigma_{3} = x_1x_2x_3$. 
If $d>3$, then 
\begin{align*}
s_{d}(x_1,x_2,x_3) = \sigma_1 s_{d-1}(x_1,x_2,x_3) - \sigma_2 s_{d-2}(x_1,x_2,x_3) + \sigma_3 s_{d-3}(x_1,x_2,x_3).
\end{align*}
Hence, we may regard $s_{d}(x_1,x_2,x_3)$ as a polynomial of $\sigma_1, \sigma_2, \sigma_3$ for each $d \geq 4$. i.e., 
\begin{equation}
s_{d}(x_1,x_2,x_3) = x_1^{d} + x_2^{d} + x_3^{d} = f_{d}(\sigma_1, \sigma_2, \sigma_3) \label{eq:4}
\end{equation}
for some polynomial $f_{d}$.
\end{theorem}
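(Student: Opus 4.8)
The plan is to exploit the single polynomial identity satisfied by the three roots and then to reduce the statement to summing the resulting recurrence. First I would observe that $x_1,x_2,x_3$ are precisely the roots of the monic cubic
\[
p(t) = (t-x_1)(t-x_2)(t-x_3) = t^3 - \sigma_1 t^2 + \sigma_2 t - \sigma_3,
\]
whose coefficients are the elementary symmetric functions $\sigma_1,\sigma_2,\sigma_3$ by Vieta's formulas. Since $p(x_i) = 0$ for each $i$, every root satisfies the characteristic relation
\[
x_i^3 = \sigma_1 x_i^2 - \sigma_2 x_i + \sigma_3.
\]

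The key step is to multiply this relation through by $x_i^{d-3}$. Because the hypothesis is $d > 3$, the exponent $d-3$ is a nonnegative integer, so no power with a negative exponent ever appears and the manipulation is purely polynomial. This produces, for each $i \in \{1,2,3\}$,
\[
x_i^d = \sigma_1 x_i^{d-1} - \sigma_2 x_i^{d-2} + \sigma_3 x_i^{d-3}.
\]
Summing over $i$ and identifying $\sum_{i} x_i^k$ with $s_k$ yields exactly the asserted recurrence $s_d = \sigma_1 s_{d-1} - \sigma_2 s_{d-2} + \sigma_3 s_{d-3}$.

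To establish the final assertion, that $s_d = f_d(\sigma_1,\sigma_2,\sigma_3)$ for a polynomial $f_d$, I would argue by strong induction on $d$. The base cases are verified directly: $s_1 = \sigma_1$, $s_2 = \sigma_1^2 - 2\sigma_2$, and $s_3 = \sigma_1^3 - 3\sigma_1\sigma_2 + 3\sigma_3$. For $d \geq 4$ the recurrence expresses $s_d$ as a $\mathbb{Z}[\sigma_1,\sigma_2,\sigma_3]$-linear combination of $s_{d-1}, s_{d-2}, s_{d-3}$, each of which is already a polynomial in the $\sigma_j$ by the inductive hypothesis; since $\mathbb{Z}[\sigma_1,\sigma_2,\sigma_3]$ is closed under addition and multiplication, $s_d$ is such a polynomial as well.

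I do not anticipate any serious obstacle, as this is the classical three-variable instance of Newton's identities. The only points demanding attention are confirming that $d-3 \geq 0$, so that the multiplication by $x_i^{d-3}$ stays within the polynomial ring (this is exactly why the recurrence is stated for $d > 3$), and recording clean base cases to seed the induction.
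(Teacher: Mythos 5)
Your proof is correct: the Vieta relation $x_i^3 = \sigma_1 x_i^2 - \sigma_2 x_i + \sigma_3$, multiplication by $x_i^{d-3}$ (valid since $d>3$), summation over $i$, and strong induction on $d$ with the base cases $s_1,s_2,s_3$ is exactly the classical argument. The paper itself offers no proof of this statement --- it is quoted as a known result (Exercise 14 in \S 7.1 of \cite{CLO}) --- so your write-up simply supplies the standard proof that the paper delegates to its reference, and it does so without gaps.
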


From Theorem \ref{thm2}, for $d>3$ we have 
\begin{equation}
f_{d}(\sigma_1,\sigma_2,\sigma_3) = \sigma_1f_{d-1}(\sigma_1,\sigma_2,\sigma_3) - \sigma_2f_{d-2}(\sigma_1,\sigma_2,\sigma_3) + \sigma_3 f_{d-3}(\sigma_1,\sigma_2,\sigma_3). \label{eq:5}
\end{equation}
Here are some examples of $f_d(\sigma_1,\sigma_2,\sigma_3)$:

\begin{exmp} \label{eg2}
(1) $d = 1$: $f_1(\sigma_1,\sigma_2,\sigma_3) = \sigma_1$.\\
(2) $d = 2$: $f_2(\sigma_1,\sigma_2,\sigma_3) = \sigma_{1}^{2}-2\sigma_{2}$.\\
(3) $d = 3$: $f_{3}(\sigma_1,\sigma_2,\sigma_3) = \sigma_{1}^{3} - 3\sigma_{1}\sigma_{2} + 3\sigma_{3}$.\\
(4) $d = 4$: $f_4(\sigma_1,\sigma_2,\sigma_3) = \sigma_{1}^{4}-4\sigma_{1}^{2}\sigma_{2}+2\sigma_{2}^{2}+4\sigma_{1}\sigma_{3}$.
\end{exmp}

In this setting, suppose $\sigma_3=1$. 
Then $\sigma_2$ becomes 
$$x_1x_2+x_2x_3+x_3x_1 = \frac{1}{x_1} + \frac{1}{x_2} + \frac{1}{x_3},$$ and \eqref{eq:5} becomes
\begin{equation}
f_{d}(\sigma_1,\sigma_2,1) = \sigma_1f_{d-1}(\sigma_1,\sigma_2,1) - \sigma_2f_{d-2}(\sigma_1,\sigma_2,1) + f_{d-3}(\sigma_1,\sigma_2,1). \label{eq:6}
\end{equation}
In terms of $x_1,x_2,x_3$, \eqref{eq:6} is
\begin{align*}
x_1^d + x_2^d + x_3^d = & f_{d} \left( x_1+x_2+x_3, \frac{1}{x_1} + \frac{1}{x_2} + \frac{1}{x_3}, 1 \right) \\
= & (x_1+x_2+x_3) f_{d-1} \left( x_1+x_2+x_3, \frac{1}{x_1} + \frac{1}{x_2} + \frac{1}{x_3},1 \right) \\
& -  \left( \frac{1}{x_1} + \frac{1}{x_2} + \frac{1}{x_3} \right) f_{d-2} \left( x_1+x_2+x_3, \frac{1}{x_1} + \frac{1}{x_2} + \frac{1}{x_3},1 \right) \\
&+ f_{d-2} \left( x_1+x_2+x_3, \frac{1}{x_1} + \frac{1}{x_2} + \frac{1}{x_3},1 \right).
\end{align*}
Put $\sigma_{2}=X$ and $\sigma_{1}=Y$. Then \eqref{eq:6} is
\begin{equation}
f_{d}(Y,X,1) = Yf_{d-1}(Y,X,1) - Xf_{d-2}(Y,X,1) + f_{d-3}(Y,X,1). \label{eq:7}
\end{equation}

We use \eqref{eq:7} in the proof of the following proposition, which is important in the proof of Conjecture \ref{conj1}.

\begin{proposition} \label{prop2}
For any $d \geq 1$, $P_{d}(X,Y) = -f_{d}(Y,X,1)$.
\end{proposition}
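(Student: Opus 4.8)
The plan is to prove the proposition by strong induction on $d$, after recasting it as a statement that two sequences satisfy one and the same linear recurrence with the same initial data. Multiplying the recurrence \eqref{eq:7} by $-1$ shows that, for $d \geq 4$,
\begin{equation*}
-f_d(Y,X,1) = Y\bigl(-f_{d-1}(Y,X,1)\bigr) - X\bigl(-f_{d-2}(Y,X,1)\bigr) + \bigl(-f_{d-3}(Y,X,1)\bigr),
\end{equation*}
so the sequence $\bigl(-f_d(Y,X,1)\bigr)_{d \geq 1}$ solves the order-three recurrence $U_d = Y U_{d-1} - X U_{d-2} + U_{d-3}$. Since such a recurrence determines its whole sequence from the three initial terms $U_1,U_2,U_3$, it suffices to prove (i) that $P_1,P_2,P_3$ agree with $-f_1(Y,X,1),-f_2(Y,X,1),-f_3(Y,X,1)$, and (ii) that $P_d(X,Y)$ satisfies the \emph{same} recurrence
\begin{equation}
P_d(X,Y) = Y P_{d-1}(X,Y) - X P_{d-2}(X,Y) + P_{d-3}(X,Y) \qquad (d \geq 4). \label{eq:Precur}
\end{equation}

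Part (i) is immediate by comparing Example \ref{eg1} with Example \ref{eg2}: one reads off $-f_1(Y,X,1) = -Y = P_1$, $-f_2(Y,X,1) = -Y^2 + 2X = P_2$, and $-f_3(Y,X,1) = -Y^3 + 3XY - 3 = P_3$. Granting (ii), the induction closes routinely: assuming the claim for $d-1,d-2,d-3$ and substituting into \eqref{eq:Precur} reproduces the displayed recurrence for $-f_d(Y,X,1)$, whence $P_d(X,Y) = -f_d(Y,X,1)$.

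The heart of the matter is therefore part (ii), which I would prove by comparing the coefficient of each monomial $X^k Y^m$ on the two sides of \eqref{eq:Precur}. Writing a general monomial of $P_d$ as $X^k Y^{d-2k-3l}$ and setting $n := k+l$ and $A := d-k-2l$, the relevant coefficient is $c_d(k,l) = \tfrac{d}{A}(-1)^{k-1}\binom{n}{k}\binom{A}{n}$. Tracking which index produces the monomial $X^k Y^m$ in each factor, one finds that $Y P_{d-1}$ contributes $c_{d-1}(k,l)$, that $-X P_{d-2}$ contributes $-c_{d-2}(k-1,l)$, and that $P_{d-3}$ contributes $c_{d-3}(k,l-1)$; in all three the common sign is $(-1)^{k-1}$ and the common denominator is $A-1 = d-1-k-2l$. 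After cancelling the sign, \eqref{eq:Precur} becomes equivalent to the purely combinatorial identity
\begin{equation*}
\frac{d}{A}\binom{n}{k}\binom{A}{n} = \frac{1}{A-1}\!\left[(d-1)\binom{n}{k}\binom{A-1}{n} + (d-2)\binom{n-1}{k-1}\binom{A-1}{n-1} + (d-3)\binom{n-1}{k}\binom{A-1}{n-1}\right].
\end{equation*}

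I expect this identity to be the main obstacle, although it is entirely elementary. I would clear denominators by multiplying through by $A(A-1)$ and then reduce every binomial to $\binom{A-1}{n-1}$ using the standard ratios $\binom{A}{n} = \tfrac{A}{A-n}\binom{A-1}{n}$ and $\binom{A-1}{n} = \tfrac{A-n}{n}\binom{A-1}{n-1}$ together with Pascal's rule $\binom{n-1}{k-1}+\binom{n-1}{k}=\binom{n}{k}$; substituting the relation $d = A+n+l$ then collapses both sides to a single expression. Two bookkeeping points deserve care: boundary indices (terms with $l=0$ or $k=0$, where $c_{d-2}(k-1,l)$ or $c_{d-3}(k,l-1)$ leaves the summation range) must be seen to vanish automatically under the convention $\binom{r}{s}=0$ for $s<0$, and one should check that for $d\geq 4$ every monomial actually occurring in $P_d$ has $A\geq 2$, so that $A-1$ never vanishes. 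Testing the identity on the first few triples $(d,k,l)$ gives a useful consistency check before carrying out the general manipulation.
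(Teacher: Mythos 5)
Your proposal is correct and is essentially the paper's own argument: both check $d=1,2,3$ against Examples \ref{eg1} and \ref{eg2}, then induct on $d$ via the recurrence \eqref{eq:7}, reducing the inductive step to exactly the combinatorial identity you display, which does hold (clearing the common factorials, with $n=k+l$ and $A=d-k-2l$, it collapses to $d(A-1)=(d-1)(A-n)+(d-2)k+(d-3)l$, an immediate consequence of $A-n=d-2k-3l$). The only cosmetic difference is bookkeeping: the paper verifies the identity in factorial form by splitting into the four cases $k,l\geq 1$; $k=0,\ l\geq 1$; $k\geq 1,\ l=0$; $k=l=0$, whereas you handle the boundary terms uniformly through the vanishing-binomial convention.
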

\begin{proof}
From Example \ref{eg1} and \ref{eg2},
\begin{align*}
-f_{1}(Y,X,1) & = -Y = P_{1}(X,Y),
\\
-f_{2}(Y,X,1) & = -Y^{2} +2X = P_{2}(X,Y),
\\
-f_{3}(Y,X,1) & = -Y^{3} +3XY -3 = P_{3}(X,Y).
\end{align*}
Now we use induction on $d$. By \eqref{eq:7}, for $d>3$ we have 
\begin{equation}
-f_{d}(Y,X,1) = Y(-f_{d-1}(Y,X,1)) - X(-f_{d-2}(Y,X,1)) + (-f_{d-3}(Y,X,1)), \label{eq:8}
\end{equation}
and by inductive hypothesis,
\begin{equation}
-f_{d-i}(Y,X,1) = (d-i) \sum_{\substack{k,l \geq 0 \\ 0 \leq 2k + 3l \leq d-i}} (-1)^{k-1} \binom{k+l}{k} \binom{d-i-k-2l}{k+l} \frac{X^{k}Y^{d-i-2k-3l}}{d-i-k-2l} \label{eq:9}
\end{equation}
for $1 \leq i \leq d-1$. Combining \eqref{eq:8} with \eqref{eq:9}, we get
\begin{align*}
-f_{d}(Y,X,1) = & Y(d-1) \sum_{\substack{k,l \geq 0 \\ 0 \leq 2k + 3l \leq d-1}} (-1)^{k-1} \binom{k+l}{k} \binom{d-1-k-2l}{k+l} \frac{X^{k}Y^{d-1-2k-3l}}{d-1-k-2l} \\
& -X (d-2) \sum_{\substack{k,l \geq 0 \\ 0 \leq 2k + 3l \leq d-2}} (-1)^{k-1} \binom{k+l}{k} \binom{d-2-k-2l}{k+l} \frac{X^{k}Y^{d-2-2k-3l}}{d-2-k-2l} \\
& + (d-3) \sum_{\substack{k,l \geq 0 \\ 0 \leq 2k + 3l \leq d-3}} (-1)^{k-1} \binom{k+l}{k} \binom{d-3-k-2l}{k+l} \frac{X^{k}Y^{d-3-2k-3l}}{d-3-k-2l} \\
\end{align*}
We explicitly compute the right hand side of the above equality. Note that $$ \binom{k+l}{k} \binom{d-k-2l}{k+l} \frac{(-1)^{k-1}d}{d-k-2l} = \frac{(-1)^{k-1}d(d-1-k-2l)!}{k!\ l!\ (d-2k-3l)!}.$$ So \eqref{eq:8} becomes
\begin{align*}
-f_{d}(Y,X,1) =& \sum_{\substack{k,l \geq 0 \\ 0 \leq 2k + 3l \leq d-1}} \frac{(-1)^{k-1}(d-1)(d-2-k-2l)!}{k!\ l!\ (d-1-2k-3l)!} X^{k}Y^{d-2k-3l} \\
&+ \sum_{\substack{k,l \geq 0 \\ 0 \leq 2k + 3l \leq d-2}} \frac{(-1)^{k}(d-2)(d-3-k-2l)!}{k!\ l!\ (d-2-2k-3l)!} X^{k+1}Y^{d-2-2k-3l} \\
&+ \sum_{\substack{k,l \geq 0 \\ 0 \leq 2k + 3l \leq d-3}} \frac{(-1)^{k-1}(d-3)(d-4-k-2l)!}{k!\ l!\ (d-3-2k-3l)!} X^{k}Y^{d-3-2k-3l}. \\
\end{align*}
To prove Proposition \ref{prop2}, it suffices to show that the summands with monomial $X^kY^{d-2k-3l}$ add up to  
\begin{equation}
\frac{(-1)^{k-1}d(d-1-k-2l)!}{k!\ l!\ (d-2k-3l)!} X^{k}Y^{d-2k-3l} \label{eq:10}
\end{equation}
for all possible pair $(i,j)$.
Before doing so, we prove the claim by dividing it into four cases. \\
\textbf{(i)} $k, l \geq 1$. \\
We obtain \eqref{eq:10} by
\begin{align*}
(-1)^{k-1} \left( \frac{(d-1)(d-2-k-2l)!}{k!\ l!\ (d-1-2k-3l)!} + \frac{(d-2)(d-2-k-2l)!}{(k-1)!\ l!\ (d-2k-3l)!} \right. \\
\left. + \frac{(d-3)(d-2-k-2l)!}{k!\ (l-1)!\ (d-2k-3l)!} \right) X^{k}Y^{d-2k-3l},
\end{align*}
where the first term comes from $Y(-f_{d-1}(Y,X,1)),$ the second term from $-X(-f_{d-2}(Y,X,1))$ for $k \rightarrow k-1$, and the third term from $f_{d-3}(Y,X,1)$ for $l \rightarrow l-1$. 
A lengthy calculation on the coefficient of $X^kY^{d-2k-3l}$ yields
\begin{align*}
& \frac{(-1)^{k-1}(d-2-k-2l)!}{(k-1)!\ (l-1)!\ (d-1-2k-3l)!} \left( \frac{d-1}{kl} + \frac{d-2}{l(d-2k-3l)} + \frac{d-3}{k(d-2k-3l)} \right) \\
=& \frac{(-1)^{k-1}(d-2-k-2l)!}{(k-1)!\ (l-1)!\ (d-1-2k-3l)!} \cdot
\frac{(d-1)(d-2k-3l) + k(d-2) + l(d-3)}{kl(d-2k-3l)} \\
=& \frac{(-1)^{k-1}(d-2-k-2l)!}{(k-1)!\ (l-1)!\ (d-1-2k-3l)!} \cdot
\frac{d(d-2k-3l)-d+dk+dl}{kl(d-2k-3l)} \\
=& \frac{(-1)^{k-1}(d-2-k-2l)!}{(k-1)!\ (l-1)!\ (d-1-2k-3l)!} \cdot
\frac{d(d-1-k-2l)}{kl(d-2k-3l)} \\
=& (-1)^{k-1} \frac{d(d-1-k-2l)!}{k!\ l!\ (d-2k-3l)!} \\
=& \binom{k+l}{k} \binom{d-k-2l}{k+l} \frac{(-1)^{k-1}d}{d-k-2l} .
\end{align*}
\textbf{(ii)} $k=0$, $l \geq 1$. \\
When $k=0$, \eqref{eq:10} becomes
\begin{equation}
-\frac{d(d-1-2l)!}{l!\ (d-3l)!}Y^{d-3l}. \label{eq:11}
\end{equation}
Since we assumed $k=0$, the second term in the case \textbf{(i)}, which was obtained by replacing $k$ with $k-1$, cannot appear. Hence,
\begin{align*}
& - \left( \frac{(d-1)(d-2-2l)!}{l!\ (d-1-3l)!} + 
\frac{(d-3)(d-2-2l)!}{(l-1)!\ (d-3l)!} \right) Y^{d-3l} \\
= & - \frac{(d-2-2l)!}{l!\ (d-3l-1)!} \left( \frac{d-1}{l} +
\frac{d-3}{d-3l} \right) Y^{d-3l}
= -\frac{d(d-1-2l)!}{l!\ (d-3l)!}Y^{d-3l}.
\end{align*}
\textbf{(iii)} $k \geq 1$, $l=0$. \\
We compute 
\begin{equation}
(-1)^{k-1} \frac{d(d-1-k)!}{k!\ (d-1-2k)!}X^{k}Y^{d-2k} \label{eq:12}
\end{equation}
as in the case \textbf{(ii)} by putting $l=0$, and excluding the third term in the case \textbf{(i)}. That is, 
\begin{align*}
& (-1)^{k-1} \left( \frac{(d-1)(d-2-k)!}{k!\ (d-1-2k)!} + 
\frac{(d-2)(d-2-k)!}{(k-1)!\ (d-2k)!} \right) X^{k}Y^{d-2k} \\
= & (-1)^{k-1} \frac{(d-2-k)!}{(k-1)!\ (d-1-2k)!} \left(
\frac{d-1}{k} + \frac{d-2}{d-2k} \right) X^{k}Y^{d-2k}
= (-1)^{k-1} \frac{d(d-1-k)!}{k!\ (d-2k)!}X^{k}Y^{d-2k}.
\end{align*}
\textbf{(iv)} $k=l=0$. \\
This term is $-Y^d$, and clearly $-Y^d$ comes from 
\begin{align*}
& Y \cdot P_{d-1}(X,Y) \\
= & Y \cdot \left( -Y^{d-1} + 
(d-1) \sum_{\substack{k,l \geq 0 \\ 0 < 2k + 3l \leq d-1}} (-1)^{k-1} \binom{k+l}{k} \binom{d-1-k-2l}{k+l} \frac{X^{k}Y^{d-1-2k-3l}}{d-1-k-2l} \right).
\end{align*}
Summing up for all such possible pairs of $(k,l)$ as \eqref{eq:10}, \eqref{eq:11}, \eqref{eq:12}, and $-Y^d$, we have 
\begin{align*}
-f_{d}(Y,X,1) =  d \sum_{\substack{k,l \geq 0 \\ 0 \leq 2k + 3l \leq d}} (-1)^{k-1} \binom{k+l}{k} \binom{d-k-2l}{k+l} \frac{X^{k}Y^{d-2k-3l}}{d-k-2l} = P_{d}(X,Y),
\end{align*} and the proof is complete. \end{proof}

Now we are ready to prove Conjecture \ref{conj1} by using Proposition \ref{prop2}. First, a corollary:

\begin{corollary} \label{cor1}
Let $x_1,x_2,x_3 \neq 0$ be such that $x_1x_2x_3=1$, and $d \geq 1$. Then $$-\left( \frac{1}{x_1^{d}} + \frac{1}{x_2^{d}} + \frac{1}{x_3^{d}} \right) = P_{d} \left( x_1+x_2+x_3, \frac{1}{x_1} + \frac{1}{x_2} + \frac{1}{x_3} \right).$$
\end{corollary}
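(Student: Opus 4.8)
The plan is to deduce the corollary directly from Proposition \ref{prop2} via the substitution $y_i := 1/x_i$, after which the three arguments appearing inside $f_d$ turn out to be precisely the elementary symmetric functions of $y_1, y_2, y_3$. Recall that Proposition \ref{prop2} gives the polynomial identity $P_d(X,Y) = -f_d(Y,X,1)$, valid for all values of $X$ and $Y$. I would apply it with $X = x_1+x_2+x_3$ and $Y = \frac{1}{x_1}+\frac{1}{x_2}+\frac{1}{x_3}$, so that the right-hand side of the corollary becomes
\[
P_{d}\left(x_1+x_2+x_3,\ \tfrac{1}{x_1}+\tfrac{1}{x_2}+\tfrac{1}{x_3}\right) = -f_{d}\left(\tfrac{1}{x_1}+\tfrac{1}{x_2}+\tfrac{1}{x_3},\ x_1+x_2+x_3,\ 1\right).
\]

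The key step is then to read off the three arguments of $f_d$ as $\sigma_1, \sigma_2, \sigma_3$ of the reciprocals $y_i = 1/x_i$. Using the hypothesis $x_1x_2x_3 = 1$, I would record that $y_1+y_2+y_3 = \frac{1}{x_1}+\frac{1}{x_2}+\frac{1}{x_3}$, that $y_1y_2+y_2y_3+y_3y_1 = \frac{1}{x_1x_2}+\frac{1}{x_2x_3}+\frac{1}{x_3x_1} = x_3+x_1+x_2$, and that $y_1y_2y_3 = \frac{1}{x_1x_2x_3} = 1$. Hence the triple $\left(\frac{1}{x_1}+\frac{1}{x_2}+\frac{1}{x_3},\ x_1+x_2+x_3,\ 1\right)$ equals $(\sigma_1(y),\sigma_2(y),\sigma_3(y))$, and by Theorem \ref{thm2}, equation \eqref{eq:4}, applied to $y_1,y_2,y_3$ we obtain $f_{d}(\sigma_1(y),\sigma_2(y),\sigma_3(y)) = y_1^{d}+y_2^{d}+y_3^{d} = \frac{1}{x_1^{d}}+\frac{1}{x_2^{d}}+\frac{1}{x_3^{d}}$. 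Substituting this into the displayed identity yields the claim.

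Since Proposition \ref{prop2} already carries the substantive content, there is no genuine obstacle here; the only points demanding care are bookkeeping ones. First, Proposition \ref{prop2} places $Y$ before $X$ inside $f_d$, whereas the corollary feeds $\sum x_i$ and $\sum 1/x_i$ into $P_d$ in the opposite slots, so I would track the argument order explicitly to avoid transposing the two sums. Second, the normalization $x_1x_2x_3 = 1$ is exactly what collapses $\sigma_2(y)$ to the clean expression $x_1+x_2+x_3$ (rather than a rational function of the $x_i$), so I would invoke that hypothesis precisely at that computation. Finally, although the Newton recursion in Theorem \ref{thm2} is stated for $d>3$, the identity $s_d = f_d(\sigma_1,\sigma_2,\sigma_3)$ holds for every $d \geq 1$, so the small cases $d \in \{1,2,3\}$ require no separate argument.
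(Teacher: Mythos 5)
Your proposal is correct and takes essentially the same route as the paper: both apply Proposition \ref{prop2} with $X = x_1+x_2+x_3$ and $Y = \tfrac{1}{x_1}+\tfrac{1}{x_2}+\tfrac{1}{x_3}$, and then identify the resulting arguments of $f_d$ as the elementary symmetric functions of the reciprocals $1/x_i$ (where $x_1x_2x_3=1$ forces $\sigma_3 = 1$ and swaps the roles of $\sigma_1$ and $\sigma_2$), so that \eqref{eq:4} evaluates $f_d$ to $\tfrac{1}{x_1^d}+\tfrac{1}{x_2^d}+\tfrac{1}{x_3^d}$. The only difference is presentational: you make the substitution $y_i = 1/x_i$ explicit, whereas the paper compresses this into a citation of \eqref{eq:4} and the discussion following Example \ref{eg2}.
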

\begin{proof}
By Proposition \ref{prop1}, 
\begin{align*}
P_{d} \left( x_1+x_2+x_3, \frac{1}{x_1} + \frac{1}{x_2} + \frac{1}{x_3} \right) = -f_{d} \left( \frac{1}{x_1} + \frac{1}{x_2} + \frac{1}{x_3}, x_1+x_2+x_3, 1 \right),
\end{align*}
and the $f_{d}$ comes from Newton identities. Combining \eqref{eq:4} with the discussion right after Example \ref{eg2}, we get the desired result.
\end{proof}

Conjecture \ref{conj1} naturally follows from Corollary \ref{cor1}.

\begin{pf}
First, we verify \eqref{eq:2}. Put $x_1 = 1/T^{a}$, $x_2 = 1/T^{b}$, $x_3 = T^{a+b}$.  Then $S_{a,b}(T) = x_1+x_2+x_3$, $x_1,x_2,x_3 \neq 0$, and $x_1x_2x_3=1$. 
From Corollary \ref{cor1},
\begin{align*}
-S_{a,b}(1/T^{|d|}) & = - \left( T^{a|d|} + T^{b|d|} + \frac{1}{T^{(a+b)|d|}} \right) = - \left( \frac{1}{x_1^{d}} + \frac{1}{x_2^{d}} + \frac{1}{x_3^{d}} \right) \\
 & = P_{d} \left( x_1+x_2+x_3, \frac{1}{x_1} + \frac{1}{x_2} + \frac{1}{x_3} \right)
= P_{|d|} ( S_{a,b}(T), S_{a,b}(1/T) ).
\end{align*} 
The proof of \eqref{eq:3} directly follows from the same method, by replacing $x_1 \rightarrow -x_1$, $x_2 \rightarrow x_2$, and $x_3 \rightarrow -x_3$. \end{pf}

\section{Proof of Conjecture 2} \label{s4}

Before we give a proof of the cases in Table \ref{table:1}, we set
$$E_{a,b}(T) = \frac{b-a}{T^a} + \frac{(-1)^{a+b}(a-2b)}{T^{b}} +bT^{a+b}.$$
For notational convenience, we let $m_{a,b}=m$ from now on. Then
\begin{align*}
R_{a,b,m}(T) & = R_{a,b}(T) + \frac{1}{mT^{m}}E_{a,b}(T),\\
R_{-a,-b,m}(T) & = R_{-a,-b}(T) + \frac{1}{mT^{m}}E_{-a,-b}(T) = R_{a,b} \left( \frac{1}{T} \right) - \frac{1}{mT^m}E_{a,b} \left( \frac{1}{T} \right).
\end{align*}
We have another thing to point out. Louboutin gives $G_{a,b}(T)$ in Conjecture 12 of \cite{Lou21} for each case, which is defined as
$$G_{a,b}(T):= F_{a,b}(R_{a,b}(T),R_{a,b}(1/T)).$$
Now that we know the conjecture holds true from the result of $\S$ \ref{s3}, we will use this fact in obtaining $\deg G_{a,b,m}(T)$. Here are the $G_{a,b}(T)$ we will use ($c := a+b \neq 0$):

\begin{table}[H]
\centering
\def\arraystretch{1.2}
\begin{tabular}{|c|c|}
\hline
Cases & $G_{a,b}(T)$ \\ \hline \hline
Case 1 &
$G_{a,b}(T) = T^{-a^2} +T^{-b^2} -T^{-c^2} +2T^{-ab}$ \\ \hline
Case 2 &
$G_{a,b}(T) = -T^{-a^2} +T^{-b^2} +T^{-c^2} +2T^{-ab}$ \\ \hline
Case 3 &
$G_{a,b}(T) = T^{-a^2} +T^{-b^2} -T^{-c^2}$ \\ \hline
\end{tabular}
\caption{$G_{a,b}(T)$ in each case.}
\label{table:3}
\end{table}

\subsection{Case 1: $a \geq 1$ odd and $b \geq 1$ odd} \label{s4.1}

In $\S$ \ref{s4.1}, we show $N_{a,b,m} = \min(a,b)^2$ from direct computation.
By Table \ref{table:1}, we have
\begin{equation}
G_{a,b,m}(T) = F_{a,b} \left( R_{a,b}(T) + \frac{1}{mT^{m}}E_{a,b}(T)
, R_{-a,-b}(T) + \frac{1}{mT^{m}}E_{-a,-b}(T) \right). \label{eq:13}
\end{equation}
Since $F_{a,b}(X,Y) = -P_{a}(Y,X) -P_{b}(Y,X) +P_{c}(X,Y)$ for Case 1, \eqref{eq:13} becomes
\begin{align}
-P_{a} \left( R_{a,b} \left( \frac{1}{T} \right) - \frac{1}{mT^m}E_{a,b} \left( \frac{1}{T} \right), R_{a,b}(T) + \frac{1}{mT^{m}}E_{a,b}(T) \right) \label{eq:14} \\
-P_{b} \left( R_{a,b} \left( \frac{1}{T} \right) - \frac{1}{mT^m}E_{a,b} \left( \frac{1}{T} \right), R_{a,b}(T) + \frac{1}{mT^{m}}E_{a,b}(T) \right) \label{eq:15} \\
+P_{c} \left( R_{a,b}(T) + \frac{1}{mT^{m}}E_{a,b}(T), R_{a,b} \left( \frac{1}{T} \right) - \frac{1}{mT^m}E_{a,b} \left( \frac{1}{T} \right) \right). \label{eq:16}
\end{align}
We can express \eqref{eq:14} as
\begin{align*}
& -P_{a} \left( R_{a,b} \left( \frac{1}{T} \right) - \frac{1}{mT^m}E_{a,b} \left( \frac{1}{T} \right), R_{a,b}(T) + \frac{1}{mT^{m}}E_{a,b}(T) \right) \\
= & a \sum_{\substack{k,l \geq 0 \\ 0 \leq 2k + 3l \leq a}} (-1)^{k} \binom{k+l}{k} \binom{a-k-2l}{k+l} \frac{1}{a-k-2l} \sum_{\substack{0 \leq i \leq k \\ 0 \leq j \leq a-2k-3l}} A_{k,i}^{a-2k-3l,j},
\end{align*}
where
\begin{align*}
A_{k,i}^{a-2k-3l,j} = & \binom{k}{i} \left( \frac{-1}{mT^{m}}E_{a,b} \left( \frac{1}{T} \right) \right)^i \left( R_{a,b} \left( \frac{1}{T} \right) \right)^{k-i} \\
& \cdot \binom{a-2k-3l}{j} \left( \frac{1}{mT^{m}}E_{a,b}(T) \right)^j (R_{a,b}(T))^{a-2k-3l-j},
\end{align*}
which can be obtained from expanding
\begin{align*}
& \left( R_{a,b} \left( \frac{1}{T} \right) - \frac{1}{mT^m}E_{a,b} \left( \frac{1}{T} \right) \right)^k \cdot \left( R_{a,b}(T) + \frac{1}{mT^{m}}E_{a,b}(T) \right)^{a-2k-3l} \\
= & \left( \sum_{i=0}^k \binom{k}{i}
 \left( \frac{-1}{mT^m}E_{a,b} \left(  \frac{1}{T} \right) \right)^i
 \left( R_{a,b} \left( \frac{1}{T} \right) \right)^{k-i} \right) \\
 & \cdot \left( \sum_{j=0}^{a-2k-3l} \binom{a-2k-3l}{j}
 \left( \frac{1}{mT^m}E_{a,b}(T) \right)^{j}
 \left( R_{a,b}(T) \right)^{a-2k-3l-j}
 \right).
\end{align*}
Note that
$$A_{k,0}^{a-2k-3l,0} = \left( R_{a,b} \left( \frac{1}{T} \right) \right)^k (R_{a,b}(T))^{a-2k-3l}.$$
Hence, we know
\begin{align*}
& -P_{a} \left( R_{a,b} \left( \frac{1}{T} \right) - \frac{1}{mT^m}E_{a,b} \left( \frac{1}{T} \right), R_{a,b}(T) + \frac{1}{mT^{m}}E_{a,b}(T) \right) \\
= & a \sum_{\substack{k,l \geq 0 \\ 0 \leq 2k + 3l \leq a}} (-1)^{k} \binom{k+l}{k} \binom{a-k-2l}{k+l} \frac{1}{a-k-2l} \left( R_{a,b} \left( \frac{1}{T} \right) \right)^k (R_{a,b}(T))^{a-2k-3l} \\
& + a \sum_{\substack{k,l \geq 0 \\ 0 \leq 2k + 3l \leq a}} (-1)^{k} \binom{k+l}{k} \binom{a-k-2l}{k+l} \frac{1}{a-k-2l} \sum_{\substack{0 \leq i \leq k \\ 0 \leq j \leq a-2k-3l \\ (i,j) \neq (0,0)}} A_{k,i}^{a-2k-3l,j} \\
= & -P_{a} \left( R_{a,b} \left( \frac{1}{T} \right), R_{a,b}(T) \right) \\
& + a \sum_{\substack{k,l \geq 0 \\ 0 \leq 2k + 3l \leq a}} (-1)^{k} \binom{k+l}{k} \binom{a-k-2l}{k+l} \frac{1}{a-k-2l} \sum_{\substack{0 \leq i \leq k \\ 0 \leq j \leq a-2k-3l \\ (i,j) \neq (0,0)}} A_{k,i}^{a-2k-3l,j}.
\end{align*}
In the same way, we can modify \eqref{eq:15} and \eqref{eq:16} as
\begin{align*}
& -P_{b} \left( R_{a,b} \left( \frac{1}{T} \right) - \frac{1}{mT^m}E_{a,b} \left( \frac{1}{T} \right), R_{a,b}(T) + \frac{1}{mT^{m}}E_{a,b}(T) \right) \\
= & -P_{b} \left( R_{a,b} \left( \frac{1}{T} \right), R_{a,b}(T) \right) \\
& + b \sum_{\substack{k,l \geq 0 \\ 0 \leq 2k + 3l \leq b}} (-1)^{k} \binom{k+l}{k} \binom{b-k-2l}{k+l} \frac{1}{b-k-2l} \sum_{\substack{0 \leq i \leq k \\ 0 \leq j \leq b-2k-3l \\ (i,j) \neq (0,0)}} B_{k,i}^{b-2k-3l,j},
\end{align*}
and 
\begin{align*}
& P_{c} \left(R_{a,b}(T) + \frac{1}{mT^{m}}E_{a,b}(T), R_{a,b} \left( \frac{1}{T} \right) - \frac{1}{mT^m}E_{a,b} \left( \frac{1}{T} \right) \right) \\
= & P_{c} \left( R_{a,b}(T), R_{a,b} \left( \frac{1}{T} \right) \right) \\
& + c \sum_{\substack{k,l \geq 0 \\ 0 \leq 2k + 3l \leq c}} (-1)^{k-1} \binom{k+l}{k} \binom{c-k-2l}{k+l} \frac{1}{c-k-2l} \sum_{\substack{0 \leq i \leq k \\ 0 \leq j \leq c-2k-3l \\ (i,j) \neq (0,0)}} C_{k,i}^{c-2k-3l,j},
\end{align*}
where
\begin{align*}
B_{k,i}^{b-2k-3l,j} = & \binom{k}{i} \left( \frac{-1}{mT^{m}}E_{a,b} \left( \frac{1}{T} \right) \right)^i \left( R_{a,b} \left( \frac{1}{T} \right) \right)^{k-i} \\
& \cdot \binom{b-2k-3l}{j} \left( \frac{1}{mT^{m}}E_{a,b}(T) \right)^j (R_{a,b}(T))^{b-2k-3l-j},
\end{align*}
and
\begin{align*}
C_{k,i}^{c-2k-3l,j} = & \binom{k}{i} \left( \frac{1}{mT^{m}}E_{a,b}(T) \right)^i (R_{a,b}(T))^{k-i} \\
& \cdot \binom{c-2k-3l}{j} \left( \frac{-1}{mT^{m}}E_{a,b} \left( \frac{1}{T} \right) \right)^j \left( R_{a,b} \left( \frac{1}{T} \right) \right)^{c-2k-3l-j}.
\end{align*}
Finally, we can express $G_{a,b,m}(T)$ as 
\begin{align*}
& -P_{a} \left( R_{a,b} \left( \frac{1}{T} \right), R_{a,b}(T) \right)  -P_{b} \left( R_{a,b} \left( \frac{1}{T} \right), R_{a,b}(T) \right)
+P_{c} \left( R_{a,b}(T), R_{a,b} \left( \frac{1}{T} \right) \right) \\
& + a \sum_{\substack{k,l \geq 0 \\ 0 \leq 2k + 3l \leq a}} (-1)^{k} \binom{k+l}{k} \binom{a-k-2l}{k+l} \frac{1}{a-k-2l} \sum_{\substack{0 \leq i \leq k \\ 0 \leq j \leq a-2k-3l \\ (i,j) \neq (0,0)}} A_{k,i}^{a-2k-3l,j} \\
& + b \sum_{\substack{k,l \geq 0 \\ 0 \leq 2k + 3l \leq b}} (-1)^{k} \binom{k+l}{k} \binom{b-k-2l}{k+l} \frac{1}{b-k-2l} \sum_{\substack{0 \leq i \leq k \\ 0 \leq j \leq b-2k-3l \\ (i,j) \neq (0,0)}} B_{k,i}^{b-2k-3l,j} \\
& + c \sum_{\substack{k,l \geq 0 \\ 0 \leq 2k + 3l \leq c}} (-1)^{k-1} \binom{k+l}{k} \binom{c-k-2l}{k+l} \frac{1}{c-k-2l} \sum_{\substack{0 \leq i \leq k \\ 0 \leq j \leq c-2k-3l \\ (i,j) \neq (0,0)}} C_{k,i}^{c-2k-3l,j}.
\end{align*}
Note that 
\begin{align*}
& -P_{a} \left( R_{a,b} \left( \frac{1}{T} \right), R_{a,b}(T) \right)  -P_{b} \left( R_{a,b} \left( \frac{1}{T} \right), R_{a,b}(T) \right)
+P_{c} \left( R_{a,b}(T), R_{a,b} \left( \frac{1}{T} \right) \right) \\
= & F_{a,b} \left( R_{a,b}(T), R_{a,b} \left( \frac{1}{T} \right) \right)
= G_{a,b}(T) = \frac{1}{T^{a^2}} +\frac{1}{T^{b^2}} -\frac{1}{T^{c^2}} +\frac{2}{T^{ab}}.
\end{align*}

So, $G_{a,b,m}(T)$ is 
\begin{align*}
& \frac{1}{T^{a^2}} +\frac{1}{T^{b^2}} -\frac{1}{T^{c^2}} +\frac{2}{T^{ab}} \\
& + a \sum_{\substack{k,l \geq 0 \\ 0 \leq 2k + 3l \leq a}} (-1)^{k} \binom{k+l}{k} \binom{a-k-2l}{k+l} \frac{1}{a-k-2l} \sum_{\substack{0 \leq i \leq k \\ 0 \leq j \leq a-2k-3l \\ (i,j) \neq (0,0)}} A_{k,i}^{a-2k-3l,j} \\
& + b \sum_{\substack{k,l \geq 0 \\ 0 \leq 2k + 3l \leq b}} (-1)^{k} \binom{k+l}{k} \binom{b-k-2l}{k+l} \frac{1}{b-k-2l} \sum_{\substack{0 \leq i \leq k \\ 0 \leq j \leq b-2k-3l \\ (i,j) \neq (0,0)}} B_{k,i}^{b-2k-3l,j} \\
& + c \sum_{\substack{k,l \geq 0 \\ 0 \leq 2k + 3l \leq c}} (-1)^{k-1} \binom{k+l}{k} \binom{c-k-2l}{k+l} \frac{1}{c-k-2l} \sum_{\substack{0 \leq i \leq k \\ 0 \leq j \leq c-2k-3l \\ (i,j) \neq (0,0)}} C_{k,i}^{c-2k-3l,j}.
\end{align*}

Next, we give formulas of $\deg A_{k,i}^{a-2k-3l,j}, \deg B_{k,i}^{b-2k-3l,j}$, and $\deg C_{k,i}^{c-2k-3l,j}$ for all possible pairs $(i,j)$ ($\neq (0,0)$) and $(k,l)$.
Note that $\deg E_{a,b}(T) = \deg R_{a,b}(T) = a+b$, $\deg R_{a,b}(1/T) = \deg E_{a,b}(1/T) = \max(a,b)$.
Hence, for all possible pairs of $i, j, k, l$, we have
\begin{align}
\deg A_{k,i}^{a-2k-3l,j} = k \max(a,b) +(a-2k-3l)(a+b) -(i+j)(a^2+ab+b^2), \label{eq:17} \\
\deg B_{k,i}^{b-2k-3l,j} = k \max(a,b) +(b-2k-3l)(a+b) -(i+j)(a^2+ab+b^2), \label{eq:18}\\
\deg C_{k,i}^{c-2k-3l,j} = k(a+b) + (c-2k-3l)\max(a,b) -(i+j)(a^2+ab+b^2). \label{eq:19}
\end{align}
Here, we divide cases based on $\max(a,b)$.

\subsubsection{$\max(a,b)=a$} \label{s4.1.1}
If $\max(a,b)=a$, then \eqref{eq:17}, \eqref{eq:18}, and \eqref{eq:19} are
\begin{align*}
\text{The RHS of \eqref{eq:17}} & = ka +(a-2k-3l)(a+b) -(i+j)(a^2+ab+b^2) \\
& = -k(a+2b) -3(a+b)l + a(a+b) -(i+j)(a^2+ab+b^2), \\
\text{that of \eqref{eq:18}} & = ka +(b-2k-3l)(a+b) -(i+j)(a^2+ab+b^2) \\
& = -k(a+2b) -3(a+b)l +b(a+b) -(i+j)(a^2+ab+b^2), \\
\text{that of \eqref{eq:19}} & = k(a+b) + (c-2k-3l)a -(i+j)(a^2+ab+b^2) \\
& = k(-a+b) -3al +a(a+b) -(i+j)(a^2+ab+b^2).
\end{align*}
Note that $-a+b<0$ since $\max(a,b)=a$ with $a \neq b$, and $i+j \geq 1$.
Now, for $k,l \in \mathbb{Z}_{\geq 0}$, we check
\begin{align}
\max \{ \deg A_{k,i}^{a-2k-3l,j} : 0 \leq i \leq k, 0 \leq j \leq a-2k-3l, (i,j) \neq (0,0), 0 \leq 2k + 3l \leq a \}, \label{eq:20} \\
\max \{ \deg B_{k,i}^{b-2k-3l,j} : 0 \leq i \leq k, 0 \leq j \leq b-2k-3l, (i,j) \neq (0,0), 0 \leq 2k + 3l \leq b \}, \label{eq:21} \\
\max \{ \deg C_{k,i}^{c-2k-3l,j} : 0 \leq i \leq k, 0 \leq j \leq c-2k-3l, (i,j) \neq (0,0), 0 \leq 2k + 3l \leq c \}. \label{eq:22}
\end{align}
For \eqref{eq:17} to be maximum, we know $k=l=i=0,j=1$. In this case, $\eqref{eq:20} = -b^{2}$. \eqref{eq:18} and \eqref{eq:19} also obtain maximum at $k=l=i=0,j=1$, and for such $i,j,k,l$, $\eqref{eq:21} = -a^2$, and $\eqref{eq:22} = -b^2$. We do not have to consider other cases of $i,j,k,l$, because as $k,l,i$, and $j$ increase, the degree of $A_{k,i}^{a-2k-3l,j}, B_{k,i}^{b-2k-3l,j}$ and $C_{k,i}^{c-2k-3l,j}$ decreases, so they do not contribute to $\deg G_{a,b,m}(T)$.

From the expression of $G_{a,b,m}(T)$ right above the equation \eqref{eq:17}, we  have $\deg G_{a,b,m}(T) \leq -b^2 = -\min(a,b)^2$.
To make = hold, it suffices to show that the coefficient of $1/T^{b^2}$ in $G_{a,b,m}(T)$ is nonzero.

Let $\text{lc}(Q)$ be the leading coefficient of a polynomial $Q \in \mathbb{Q}[T,T^{-1}]$, and $\alpha_{k}(Q)$ be the coefficient of the monomial $T^k$ in $Q \in \mathbb{Q}[T,T^{-1}],\ k \in \mathbb{Z}$. Then $\alpha_{-b^2}(G_{a,b,m})$ comes from $\text{lc}(A_{0,0}^{a,1})$, $\text{lc}(C_{0,0}^{c,1})$, and $1/T^{b^2}$ of $G_{a,b}(T)$. Namely,
\begin{align*}
\alpha_{-b^2}(G_{a,b,m}) = a & \binom{0+0}{0} \binom{a-0-0}{0} \frac{1}{a-0-0} \cdot \text{lc}(A_{0,0}^{a,1}) \\
& + c \binom{0+0}{0} \binom{c-0-0}{0} \frac{-1}{c-0-0} \cdot \text{lc}(C_{0,0}^{c,1}) + 1. 
\end{align*}
Carefully expanding $A_{0,0}^{a,1}$ and $C_{0,0}^{c,1}$, we know
\begin{align*}
a \binom{0+0}{0} \binom{a-0-0}{0} \frac{1}{a-0-0}
\cdot \text{lc}(A_{0,0}^{a,1}) = \frac{ab}{a^2+ab+b^2}, \\
c \binom{0+0}{0} \binom{c-0-0}{0} \frac{-1}{c-0-0}
 \cdot \text{lc}(C_{0,0}^{c,1}) = \frac{b^2-a^2}{a^2+ab+b^2}.
\end{align*}
Hence,
\begin{align*}
\alpha_{-b^2}(G_{a,b,m}) = \frac{ab}{a^2+ab+b^2} +\frac{b^2-a^2}{a^2+ab+b^2} +1 = \frac{2b(a+b)}{a^2+ab+b^2} = \frac{2c \min(a,b)}{a^2+ab+b^2} \neq 0,
\end{align*}
and we obtain $\alpha_{-b^2}(G_{a,b,m}) = \text{lc}(G_{a,b,m}) \neq 0$.

\begin{rmk}
While the numerator of $\text{lc}(G_{a,b,m})$ and that of $q_{n}$ in Table 2 of \cite{Lou21} coincide, the denominators are different.
This happens in other cases below.
But our result agrees with Louboutin's prediction on $N_{a,b,m}$.
\end{rmk}

\subsubsection{$\max(a,b)=b$}
We repeat what we did in \ref{s4.1.1}.
If $\max(a,b)=b$, then \eqref{eq:17}, \eqref{eq:18}, and \eqref{eq:19} are
\begin{align*}
\text{The RHS of \eqref{eq:17}} & = kb +(a-2k-3l)(a+b) -(i+j)(a^2+ab+b^2) \\
& = -k(2a+b) -3(a+b)l + a(a+b) -(i+j)(a^2+ab+b^2), \\
\text{That of \eqref{eq:18}} & = kb +(b-2k-3l)(a+b) -(i+j)(a^2+ab+b^2) \\
& = -k(2a+b) -3(a+b)l +b(a+b) -(i+j)(a^2+ab+b^2), \\
\text{That of \eqref{eq:19}} & = k(a+b) + (c-2k-3l)b -(i+j)(a^2+ab+b^2) \\
& = k(a-b) -3bl +b(a+b) -(i+j)(a^2+ab+b^2).
\end{align*}
We also have the maximum of $\deg A_{k,i}^{a-2k-3l,j}, \deg B_{k,i}^{b-2k-3l,j}$, and $\deg C_{k,i}^{c-2k-3l,j}$ when $k=l=i=0$, $j=1$. Then $\deg A_{0,0}^{a,1}=-b^2$, $\deg B_{0,0}^{b,1}=-a^2$, and $\deg C_{0,0}^{c,1}=-a^2$. So, $\deg G_{a,b,m}(T) = -a^2$ provided $\alpha_{-a^2}(G_{a,b,m}) \neq 0$. By checking $\alpha_{-a^2}(G_{a,b,m})$ as
\begin{align*}
\alpha_{-a^2}(G_{a,b,m}) = & b \binom{0+0}{0} \binom{b-0-0}{0} \frac{1}{b-0-0} \cdot \text{lc}(B_{0,0}^{b,1}) \\
 & + c \binom{0+0}{0} \binom{c-0-0}{0} \frac{-1}{c-0-0} \cdot \text{lc}(C_{0,0}^{c,1}) + 1 \\
= & \frac{b^2}{a^2+ab+b^2} +\frac{(b+a)(a-2b)}{a^2+ab+b^2} +1
= \frac{2a^2}{a^2+ab+b^2} = \frac{2\min(a,b)a}{a^2+ab+b^2} \neq 0,
\end{align*}
we have $\alpha_{-a^2}(G_{a,b,m}) = \text{lc}(G_{a,b,m}) \neq 0$ and conclude $\deg G_{a,b,m}(T) = -a^2 = -\min(a,b)^2 = -N_{a,b,m}$.

\subsection{Case 2: $a \geq 1$ odd and $b \geq 1$ even}

The argument in this subsection is similar to $\S$ \ref{s4.1}.
For Case 2, $F_{a,b}(X,Y)$ and $G_{a,b}(T)$ are given by 
$F_{a,b}(X,Y) = -P_{a}(-Y,-X) -P_{b}(-Y,-X) +P_{c}(-X,-Y)$, and $G_{a,b}(T) = -T^{-a^2} +T^{-b^2} +T^{-c^2} +2T^{-ab}$.
As in Case 1, we put $X = R_{a,b,m}(T)$ and $Y = R_{-a,-b,m}(T)$.
Then $G_{a,b,m}(T)$ satisfies
\begin{align*}
G_{a,b,m}(T) = & -P_{a} \left( -\left( R_{a,b} \left( \frac{1}{T} \right) - \frac{1}{mT^m}E_{a,b} \left( \frac{1}{T} \right) \right),  -\left( R_{a,b}(T) + \frac{1}{mT^{m}}E_{a,b}(T) \right) \right) \\
& -P_{b} \left( -\left( R_{a,b} \left( \frac{1}{T} \right) - \frac{1}{mT^m}E_{a,b} \left( \frac{1}{T} \right) \right), -\left( R_{a,b}(T) + \frac{1}{mT^{m}}E_{a,b}(T) \right) \right) \\
& +P_{c} \left( -\left( R_{a,b}(T) + \frac{1}{mT^{m}}E_{a,b}(T) \right), -\left( R_{a,b} \left( \frac{1}{T} \right) - \frac{1}{mT^m}E_{a,b} \left( \frac{1}{T} \right) \right) \right) \\
= &  -P_{a} \left(-R_{a,b} \left( \frac{1}{T} \right), -R_{a,b}(T) \right)  -P_{b} \left( -R_{a,b} \left( \frac{1}{T} \right), -R_{a,b}(T) \right) \\
& +P_{c} \left( -R_{a,b}(T), -R_{a,b} \left( \frac{1}{T} \right) \right) \\
& + a \sum_{\substack{k,l \geq 0 \\ 0 \leq 2k + 3l \leq a}} (-1)^{k} \binom{k+l}{k} \binom{a-k-2l}{k+l} \frac{(-1)^{a-k-3l}}{a-k-2l} \sum_{\substack{0 \leq i \leq k \\ 0 \leq j \leq a-2k-3l \\ (i,j) \neq (0,0)}} A_{k,i}^{a-2k-3l,j} \\
& + b \sum_{\substack{k,l \geq 0 \\ 0 \leq 2k + 3l \leq b}} (-1)^{k} \binom{k+l}{k} \binom{b-k-2l}{k+l} \frac{(-1)^{b-k-3l}}{b-k-2l} \sum_{\substack{0 \leq i \leq k \\ 0 \leq j \leq b-2k-3l \\ (i,j) \neq (0,0)}} B_{k,i}^{b-2k-3l,j} \\
& + c \sum_{\substack{k,l \geq 0 \\ 0 \leq 2k + 3l \leq c}} (-1)^{k-1} \binom{k+l}{k} \binom{c-k-2l}{k+l} \frac{(-1)^{c-k-3l}}{c-k-2l} \sum_{\substack{0 \leq i \leq k \\ 0 \leq j \leq c-2k-3l \\ (i,j) \neq (0,0)}} C_{k,i}^{c-2k-3l,j}\\
= & -\frac{1}{T^{a^2}} +\frac{1}{T^{b^2}} +\frac{1}{T^{c^2}} +\frac{2}{T^{ab}} \\
& + a \sum_{\substack{k,l \geq 0 \\ 0 \leq 2k + 3l \leq a}} (-1)^{k} \binom{k+l}{k} \binom{a-k-2l}{k+l} \frac{(-1)^{a-k-3l}}{a-k-2l} \sum_{\substack{0 \leq i \leq k \\ 0 \leq j \leq a-2k-3l \\ (i,j) \neq (0,0)}} A_{k,i}^{a-2k-3l,j} \\
& + b \sum_{\substack{k,l \geq 0 \\ 0 \leq 2k + 3l \leq b}} (-1)^{k} \binom{k+l}{k} \binom{b-k-2l}{k+l} \frac{(-1)^{b-k-3l}}{b-k-2l} \sum_{\substack{0 \leq i \leq k \\ 0 \leq j \leq b-2k-3l \\ (i,j) \neq (0,0)}} B_{k,i}^{b-2k-3l,j} \\
& + c \sum_{\substack{k,l \geq 0 \\ 0 \leq 2k + 3l \leq c}} (-1)^{k-1} \binom{k+l}{k} \binom{c-k-2l}{k+l} \frac{(-1)^{c-k-3l}}{c-k-2l} \sum_{\substack{0 \leq i \leq k \\ 0 \leq j \leq c-2k-3l \\ (i,j) \neq (0,0)}} C_{k,i}^{c-2k-3l,j}.
\end{align*}
In summary, we have
\begin{align}
\deg A_{k,i}^{a-2k-3l,j} = k \max(a,b) +(a-2k-3l)(a+b) -(i+j)(a^2+ab+b^2), \label{eq:23} \\
\deg B_{k,i}^{b-2k-3l,j} = k \max(a,b) +(b-2k-3l)(a+b) -(i+j)(a^2+ab+b^2), \label{eq:24} \\
\deg C_{k,i}^{c-2k-3l,j} = k(a+b) + (c-2k-3l)\max(a,b) -(i+j)(a^2+ab+b^2). \label{eq:25}
\end{align}

\subsubsection{$\max(a,b)=a$} \label{s4.2.1}
We have
\begin{align*}
\text{The RHS of \eqref{eq:23}} & = ka +(a-2k-3l)(a+b) -(i+j)(a^2+ab+b^2) \\
& = -k(a+2b) -3(a+b)l + a(a+b) -(i+j)(a^2+ab+b^2), \\
\text{That of \eqref{eq:24}} & = ka +(b-2k-3l)(a+b) -(i+j)(a^2+ab+b^2) \\
& = -k(a+2b) -3(a+b)l +b(a+b) -(i+j)(a^2+ab+b^2), \\
\text{That of \eqref{eq:25}} & = k(a+b) + (c-2k-3l)a -(i+j)(a^2+ab+b^2) \\
& = k(-a+b) -3al +a(a+b) -(i+j)(a^2+ab+b^2).
\end{align*}
For \eqref{eq:23}, \eqref{eq:24} and \eqref{eq:25} to attain maximum, $k=l=i=0,j=1$. In this case, $\deg A_{0,0}^{a,1}=-b^2$, $\deg B_{0,0}^{b,1}=-a^2$, and $\deg C_{0,0}^{c,1}=-b^2$. We do not have to consider other tuples of $k,l,i,j$, since the degree is strictly smaller for $(k,l,i,j) \neq (0,0,0,1)$. Observing $\alpha_{-b^2}(G_{a,b,m})$ as
\begin{align*}
& a \binom{0+0}{0} \binom{a-0-0}{0} \frac{(-1)^a}{a-0-0} \cdot \text{lc}(A_{0,0}^{a,1})
 + c \binom{0+0}{0} \binom{c-0-0}{0} \frac{(-1)^{c+1}}{c-0-0} \cdot \text{lc}(C_{0,0}^{c,1}) + 1 \\
& = -\frac{ab}{a^2+ab+b^2} -\frac{b^2-a^2}{a^2+ab+b^2} +1 
= \frac{2a^2}{a^2+ab+b^2} \neq 0,
\end{align*}
we know $\alpha_{-b^2}(G_{a,b,m})=\text{lc}(G_{a,b,m}) \neq 0$ and $\deg G_{a,b,m}(T) = -b^2 = - \min(a,b)^2 = -N_{a,b,m}$.

\subsubsection{$\max(a,b)=b$}
We have
\begin{align*}
\text{The RHS of \eqref{eq:23}} & = kb +(a-2k-3l)(a+b) -(i+j)(a^2+ab+b^2) \\
& = -k(2a+b) -3(a+b)l + a(a+b) -(i+j)((a^2+ab+b^2), \\
\text{That of \eqref{eq:24}} & = kb +(b-2k-3l)(a+b) -(i+j)(a^2+ab+b^2) \\
& = -k(2a+b) -3(a+b)l +b(a+b) -(i+j)((a^2+ab+b^2), \\
\text{That of \eqref{eq:25}} & = k(a+b) + (c-2k-3l)b -(i+j)(a^2+ab+b^2) \\
& = k(a-b) -3bl +b(a+b) -(i+j)(a^2+ab+b^2).
\end{align*}
As in \ref{s4.2.1}, we only have to consider $k=l=i=0,j=1$. Observing $\alpha_{-a^2}(G_{a,b,m})$ by
\begin{align*}
& b \binom{0+0}{0} \binom{b-0-0}{0} \frac{(-1)^b}{b-0-0} \cdot \text{lc}(B_{0,0}^{b,1}) \\
& + c \binom{0+0}{0} \binom{c-0-0}{0} \frac{(-1)^{c+1}}{c-0-0} \cdot \text{lc}(C_{0,0}^{c,1}) -1 \\
= & \frac{b^2}{a^2+ab+b^2} +\frac{(b+a)(a-2b)}{a^2+ab+b^2} -1
= \frac{-2b(a+b)}{a^2+ab+b^2} = \frac{-2bc}{a^2+ab+b^2},
\end{align*}
we conclude $\deg G_{a,b,m}(T) = -a^2 = -\min(a,b)^2 = -N_{a,b,m}$.

\subsection{Case 3: $a \geq 2$ even and $b \geq 1$ odd}

While explicitly describing $G_{a,b,m}(T)$ by putting $X=R_{a,b,m}(T)$ and $Y=R_{-a,-b,m}(T)$ at $F_{a,b}(X,Y) = -P_{a}(-Y,-X) -P_{b}(-Y,-X) -P_{c}(-X,-Y)$, we only have to take extra care of $-P_{c}(-X,-Y)$, with others same as in Case 2.
Note that $G_{a,b}(T) = T^{-a^2} +T^{-b^2} -T^{-c^2}$. So,

\begin{align*}
G_{a,b,m}(T) = & -P_{a} \left( -\left( R_{a,b} \left( \frac{1}{T} \right) - \frac{1}{mT^m}E_{a,b} \left( \frac{1}{T} \right) \right),  -\left( R_{a,b}(T) + \frac{1}{mT^{m}}E_{a,b}(T) \right) \right) \\
& -P_{b} \left( -\left( R_{a,b} \left( \frac{1}{T} \right) - \frac{1}{mT^m}E_{a,b} \left( \frac{1}{T} \right) \right), -\left( R_{a,b}(T) + \frac{1}{mT^{m}}E_{a,b}(T) \right) \right) \\
& -P_{c} \left( -\left( R_{a,b}(T) + \frac{1}{mT^{m}}E_{a,b}(T) \right), -\left( R_{a,b} \left( \frac{1}{T} \right) - \frac{1}{mT^m}E_{a,b} \left( \frac{1}{T} \right) \right) \right) \\
= &  -P_{a} \left(-R_{a,b} \left( \frac{1}{T} \right), -R_{a,b}(T) \right)  -P_{b} \left( -R_{a,b} \left( \frac{1}{T} \right), -R_{a,b}(T) \right) \\
& -P_{c} \left( -R_{a,b}(T), -R_{a,b} \left( \frac{1}{T} \right) \right) \\
& + a \sum_{\substack{k,l \geq 0 \\ 0 \leq 2k + 3l \leq a}} (-1)^{k} \binom{k+l}{k} \binom{a-k-2l}{k+l} \frac{(-1)^{a-k-3l}}{a-k-2l} \sum_{\substack{0 \leq i \leq k \\ 0 \leq j \leq a-2k-3l \\ (i,j) \neq (0,0)}} A_{k,i}^{a-2k-3l,j} \\
& + b \sum_{\substack{k,l \geq 0 \\ 0 \leq 2k + 3l \leq b}} (-1)^{k} \binom{k+l}{k} \binom{b-k-2l}{k+l} \frac{(-1)^{b-k-3l}}{b-k-2l} \sum_{\substack{0 \leq i \leq k \\ 0 \leq j \leq b-2k-3l \\ (i,j) \neq (0,0)}} B_{k,i}^{b-2k-3l,j} \\
& + c \sum_{\substack{k,l \geq 0 \\ 0 \leq 2k + 3l \leq c}} (-1)^{k} \binom{k+l}{k} \binom{c-k-2l}{k+l} \frac{(-1)^{c-k-3l}}{c-k-2l} \sum_{\substack{0 \leq i \leq k \\ 0 \leq j \leq c-2k-3l \\ (i,j) \neq (0,0)}} C_{k,i}^{c-2k-3l,j}\\
= & \frac{1}{T^{a^2}} +\frac{1}{T^{b^2}} -\frac{1}{T^{c^2}} \\
& + a \sum_{\substack{k,l \geq 0 \\ 0 \leq 2k + 3l \leq a}} (-1)^{k} \binom{k+l}{k} \binom{a-k-2l}{k+l} \frac{(-1)^{a-k-3l}}{a-k-2l} \sum_{\substack{0 \leq i \leq k \\ 0 \leq j \leq a-2k-3l \\ (i,j) \neq (0,0)}} A_{k,i}^{a-2k-3l,j} \\
& + b \sum_{\substack{k,l \geq 0 \\ 0 \leq 2k + 3l \leq b}} (-1)^{k} \binom{k+l}{k} \binom{b-k-2l}{k+l} \frac{(-1)^{b-k-3l}}{b-k-2l} \sum_{\substack{0 \leq i \leq k \\ 0 \leq j \leq b-2k-3l \\ (i,j) \neq (0,0)}} B_{k,i}^{b-2k-3l,j} \\
& + c \sum_{\substack{k,l \geq 0 \\ 0 \leq 2k + 3l \leq c}} (-1)^{k} \binom{k+l}{k} \binom{c-k-2l}{k+l} \frac{(-1)^{c-k-3l}}{c-k-2l} \sum_{\substack{0 \leq i \leq k \\ 0 \leq j \leq c-2k-3l \\ (i,j) \neq (0,0)}} C_{k,i}^{c-2k-3l,j}.
\end{align*}
As before, we know
\begin{align}
\deg A_{k,i}^{a-2k-3l,j} = k \max(a,b) +(a-2k-3l)(a+b) -(i+j)(a^2+ab+b^2), \label{eq:26}\\
\deg B_{k,i}^{b-2k-3l,j} = k \max(a,b) +(b-2k-3l)(a+b) -(i+j)(a^2+ab+b^2), \label{eq:27} \\
\deg C_{k,i}^{c-2k-3l,j} = k(a+b) + (c-2k-3l)\max(a,b) -(i+j)(a^2+ab+b^2). \label{eq:28}
\end{align}

\subsubsection{$\max(a,b)=a$}
For $\max(a,b)=a$, we have
\begin{align*}
\text{The RHS of \eqref{eq:26}} & = ka +(a-2k-3l)(a+b) -(i+j)(a^2+ab+b^2) \\
& = -k(a+2b) -3(a+b)l + a(a+b) -(i+j)(a^2+ab+b^2), \\
\text{That of \eqref{eq:27}} & = ka +(b-2k-3l)(a+b) -(i+j)(a^2+ab+b^2) \\
& = -k(a+2b) -3(a+b)l +b(a+b) -(i+j)(a^2+ab+b^2), \\
\text{That of \eqref{eq:28}} & = k(a+b) + (c-2k-3l)a -(i+j)(a^2+ab+b^2) \\
& = k(-a+b) -3al +a(a+b) -(i+j)(a^2+ab+b^2).
\end{align*}
As before, \eqref{eq:26}, \eqref{eq:27}, and \eqref{eq:28} have maximum at $k=l=i=0,j=1$ with $\deg A_{0,0}^{a,1}=-b^2$, $\deg B_{0,0}^{b,1}=-a^2$, and $\deg C_{0,0}^{c,1}=-b^2$. So, $\deg G_{a,b,m}(T) = -b^2$ provided that $\alpha_{-b^2}(G_{a,b,m}) \neq 0$.
$\alpha_{-b^2}(G_{a,b,m})$ can be described as
\begin{align*}
& a \binom{0+0}{0} \binom{a-0-0}{0} \frac{(-1)^a}{a-0-0} \cdot \text{lc}(A_{0,0}^{a,1})
 + c \binom{0+0}{0} \binom{c-0-0}{0} \frac{(-1)^{c}}{c-0-0} \cdot \text{lc}(C_{0,0}^{c,1}) + 1 \\
& = \frac{ab}{a^2+ab+b^2} +\frac{b^2-a^2}{a^2+ab+b^2} +1 
= \frac{2b(a+b)}{a^2+ab+b^2} = \frac{2bc}{a^2+ab+b^2} \neq 0.
\end{align*}
So, we know $\alpha_{-b^2}(G_{a,b,m}) = \text{lc}(G_{a,b,m}) \neq 0$ and $\deg G_{a,b,m}(T) = -b^2 = - \min(a,b)^2 = -N_{a,b,m}$.

\subsubsection{$\max(a,b)=b$}
\begin{align*}
\text{The RHS of \eqref{eq:26}} & = kb +(a-2k-3l)(a+b) -(i+j)(a^2+ab+b^2) \\
& = -k(2a+b) -3(a+b)l + a(a+b) -(i+j)(a^2+ab+b^2), \\
\text{That of \eqref{eq:27}} & = kb +(b-2k-3l)(a+b) -(i+j)(a^2+ab+b^2) \\
& = -k(2a+b) -3(a+b)l +b(a+b) -(i+j)(a^2+ab+b^2), \\
\text{That of \eqref{eq:28}} & = k(a+b) + (c-2k-3l)b -(i+j)(a^2+ab+b^2) \\
& = k(a-b) -3bl +b(a+b) -(i+j)(a^2+ab+b^2).
\end{align*}
In this case, for $k=l=i=0,j=1$, $\deg A_{0,0}^{a,1}=-b^2$, $\deg B_{0,0}^{b,1}=-a^2$, and $\deg C_{0,0}^{c,1}=-a^2$. Since $\alpha_{-a^2}(G_{a,b,m})$ is
\begin{align*}
& b \binom{0+0}{0} \binom{b-0-0}{0} \frac{(-1)^b}{b-0-0} \cdot \text{lc}(B_{0,0}^{b,1}) \\
& + c \binom{0+0}{0} \binom{c-0-0}{0} \frac{(-1)^{c}}{c-0-0} \cdot \text{lc}(C_{0,0}^{c,1}) -1 \\
= & -\frac{b^2}{a^2+ab+b^2} -\frac{(b+a)(a-2b)}{a^2+ab+b^2} +1
= \frac{2b(a+b)}{a^2+ab+b^2} = \frac{2bc}{a^2+ab+b^2} \neq 0,
\end{align*}
we conclude $\deg G_{a,b,m}(T) = -a^2 = -\min(a,b)^2 = -N_{a,b,m}$.
Hence, the proof of Conjecture \ref{conj2} is complete.

\printbibliography

@Article{Lou21,
  author  = {Stéphane Louboutin},
  journal = {Moscow Mathematical Journal},
  title   = {On Ennola's Conjecture on Non-Galois Cubic Number Fields with Exceptional Units},
  year    = {2021},
  number  = {4},
  pages   = {789-805},
  volume  = {21},
}

@Article{Enn06,
  author  = {Veikko Ennola},
  journal = {J Théor Nombres Bordeaux},
  title   = {Fundamental units in a family of cubic fields},
  year    = {2006},
  number  = {3},
  volume  = {16},
}

@Article{Enn91,
  author  = {Veikko Ennola},
  journal = {Computational Number Theory},
  title   = {Cubic number fields with exceptional units},
  year    = {1991},
  pages   = {103-128},
}

@Article{Lou17,
  author  = {Stéphane Louboutin},
  journal = {Publ. Math. Debrecen},
  title   = {Non-Galois cubic numver fields with exceptional units},
  year    = {2017},
  pages   = {153-170},
  volume  = {91},
}

@Article{KY,
  author  = {Kim, Min Hoon and Yamada, Shohei},
  journal = {arXiv},
  title   = {Ideal classes and Cappell-Shaneson homotopy 4-spheres},
  year    = {2017},
}

@Book{CLO,
  author    = {David Cox and John Little and Donal O'shea},
  publisher = {Springer-Verlag},
  title     = {Ideals, Varieties, and Algorithms},
  year      = {2007},
  edition   = {3rd},
  series    = {UTM},
}

@Article{LL,
  author  = {Stéphane Louboutin and Jun Ho Lee},
  journal = {International Journal of Number Theory},
  title   = {Fundamental units for a family of totally real cubic orders and the diophantine equation $u(u+a)(u+2a) = v(v+1)$},
  year    = {2017},
  number  = {7},
  pages   = {1729-1746},
  volume  = {13},
}





\end{document}